\documentclass{aims_logopdf}

\usepackage{txfonts}

\setcounter{page}{55}

\captionsetup[figure]{font=normalsize,labelfont=bf,singlelinecheck=true}
\captionsetup[table]{font=normalsize,labelfont=bf,singlelinecheck=true}

\tolerance=1
\emergencystretch=\maxdimen
\hyphenpenalty=10000
\hbadness=10000

\numberwithin{equation}{section}

\makeatletter
\renewcommand{\@biblabel}[1]{#1\hfill \hspace{-0.2cm}}
\makeatother

\usepackage{psfrag}
\usepackage[percent]{overpic}

\newcommand{\comments}[1]{}
\newcommand{\bea}{\begin{eqnarray}}
\newcommand{\eda}{\end{eqnarray}}
\newcommand{\bdis}{\begin{displaymath}}
\newcommand{\edis}{\end{displaymath}}
\newcommand{\bee}{\begin{equation}}
\newcommand{\ede}{\end{equation}}
\newcommand{\R}{\mathbb{R}}
\newcommand{\N}{\mathbb{N}}
\newcommand{\vmin}{\underbar{\em{v}}}
\newcommand{\vmax}{\bar{v}}
\newcommand{\rp}{\rho_r}
\renewcommand{\rm}{\rho_\ell}

\DeclareMathOperator{\cco}{\overline{co}}

\newtheorem{remark}{Remark}
\newtheorem{lemma}{Lemma}
\newtheorem{proposition}{Proposition}
\newtheorem{definition}{Definition}

\begin{document}

\title{Two algorithms for a fully coupled and consistently macroscopic PDE-ODE system modeling a moving bottleneck on a road}

\author{%
  Gabriella Bretti\affil{1},
  Emiliano Cristiani\affil{1},
  Corrado Lattanzio\affil{2},
  Amelio Maurizi\affil{2}
  and
  Benedetto Piccoli\affil{3,}\corrauth
}

\shortauthors{the Author(s)}

\address{%
  \addr{\affilnum{1}}{Istituto per le Applicazioni del Calcolo ``M. Picone'', Consiglio Nazionale delle Ricerche, Via dei Taurini 19, 00185 - Rome, Italy}
  \addr{\affilnum{2}}{Dipartimento di Ingegneria e Scienze dell'Informazione e Matematica, Universit\`a degli Studi dell'Aquila, Via Vetoio, 67100 - Coppito (AQ), Italy}
  \addr{\affilnum{3}}{Department of Mathematical Sciences, Rutgers University, \ 311 N 5th Street, Camden, NJ 08102, USA}}

\corraddr{Email: piccoli@camden.rutgers.edu; Tel: 8562256356; Fax: 8562256624.
}

\begin{abstract}
In this paper we propose two numerical algorithms to solve a coupled
PDE-ODE system which models a slow vehicle (bottleneck) moving
on a road together with other cars.
The resulting system is fully coupled because the dynamics of the slow
vehicle depends on the density of cars and, at the same time, it
causes a capacity drop in the road, thus limiting the car flux.
The first algorithm, based on the Wave Front Tracking method,
is suitable for theoretical investigations and convergence results. The second one,
based on the Godunov scheme, is used for numerical simulations.
The case of multiple bottlenecks is also investigated.
\end{abstract}

\keywords{
conservation laws; discontinuous ODEs; fluid dynamic models; LWR model}

\maketitle

\section{Introduction}
In this paper we focus on a system modeling a slow vehicle (e.g.\ large truck or bus) which moves in a car traffic flow. We assume that the slow vehicle's velocity is influenced by the car density and, at the same time, it causes a capacity drop in the road at the position where it is located. Therefore we use the terminology {\em moving bottleneck}.

Mathematically, the problem gives rise to a coupled PDE-ODE system. Car density $\rho$ solves the following conservation law

 \begin{align}\label{lwr}
 \left\{ \begin{array}{lll}
\partial_t \rho(t,x) + \partial_x \Big[ f(\rho(t,x),x-y(t)) \Big]= 0,\\
\rho(0,x)= \bar\rho(x),
  \end{array}
   \right.
 \end{align}
with
$t \in [0,T]$,
$\rho \in [0,\rho_{max}]$,
$(t,x) \in \R^+\times\R$,
$f:[0,\rho_{max}]\times\R\to\R$ the flux function,
$\bar\rho$ the initial datum and
$y(t)$ the position of the moving bottleneck, which is assumed to solve the following Cauchy problem
\bee \label{ode}
 \left\{
 \begin{array}{lll}
 \dot{y}(t) = w(\rho(t,y)),\\
  y(0) = y_0,
  \end{array}
   \right.
\ede
with $y_0$ the position at initial time $t=0$ and $w\geq 0$ the velocity of the moving bottleneck. An example of $f$ and $w$ will be given in the next Section, see equations (\ref{vel_bus})--(\ref{flusso}).

It is well known that the function $\rho(t,\cdot)$ may be discontinuous, then solutions to both (\ref{lwr}) and (\ref{ode}) are to be intended in generalized sense.
The case of multiple moving bottlenecks will be also considered.

\paragraph{Relevant literature.} Let us mention a few results related to our work.
Modelling of vehicular traffic with fluid dynamic approach started with the seminal works of Lighthill and Whitham \cite{WL} and Richards \cite{RI}.
The attention on this subject continued in the last years, both for the proposal of more accurate models, see, e.g., \cite{AwR, Colombo, colombo2016M3AS}, and for the treatment of complex networks, see, e.g., \cite{bretti2014DCDS-S, briani2014NHM, herty, HR} and the book \cite{GPbook}.
To deal with the case of different vehicles, such as motorcycles, cars, buses, multi-population models were also considered, see \cite{benzoni-colombo2003,wong}.

The problem of determining the trajectory of a single car given the density of cars was addressed in \cite{CM}, and the relative numerics in \cite{BP08}, taking advantage of vector techniques. In this case the observed car is assumed not to influence significantly the traffic flow, thus the system is not fully coupled.
Here we consider the more complicated situation in which the position of the single vehicle influences the whole traffic flow.

The coupled system (\ref{lwr})--(\ref{ode}) was proposed in \cite{LMP09}. More precisely, that paper introduced a notion of solution based on weak solutions for the PDE and Filippov solutions for the ODE, and proved existence of such a solution under suitable assumptions.

Recently, another fully coupled system for a moving bottleneck problem was studied in \cite{goatin2014DCDS-S, goatin2014JDE, goatinNHM2017, piccolipreprint2018} from both the theoretical and numerical point of view (see also \cite{goatin2017DCDSSB} for an extension to the second order ARZ model) . Furthermore, in \cite{goatin2018preprint} an optimal control problem using the maximal speed of the coordinated vehicle as control variable is stated. It is worth to point out the difference with the model we consider here: the model in  \cite{goatin2014DCDS-S, goatin2014JDE}, following the lines of \cite{New98}, assumes that the moving bottleneck reduces the maximal density attainable in the road, together with its maximal flux (i.e.\ the capacity of the road). Moreover, it assumes that the moving bottleneck moves at constant speed for low car density. Instead, in the model we consider here the moving bottleneck does not reduce the maximal density attainable in the road, while it only reduces the maximal flux. Moreover, its velocity is always dependent on the car density. These features make the model fully consistent with a pure macroscopic point of view.

Let us also recall that the problem of modeling bottlenecks, moving or not, was addressed in \cite{andreaianov2010NM, ColG, GNPT} and the case of multiple moving bottlenecks was already considered in \cite{LMP10}.

Finally, the terminology ``moving bottleneck'' is known also in transport engineering literature: see, among others, \cite{GH92, LaDa05, LLG, New98}. In these papers the problem of moving bottlenecks, both with a given path \cite{GH92, LLG, New98} and with a mutual interaction with the surrounding flow \cite{LaDa05}, is studied, but without a detailed description of the resulting schemes and of their convergence.

\paragraph{Goal.} We propose two different numerical algorithms, the first allowing theoretical investigations, while the second allowing an actual implementation on a computer.

The first algorithm, called WFT-ODE, combines the Wave Front Tracking (WFT) method \cite{B,HRbook} to solve (\ref{lwr})
with an exact method to solve (\ref{ode}). The moving bottleneck position
is traced taking into consideration interactions with (shock or rarefaction) waves.
Focusing on bounded variation data, we establish a convergence result
for the approximate solution towards a solution to (\ref{lwr})--(\ref{ode}).
We are also able to provide a convergence rate for $y$  in terms of total
variation of initial datum of density, namely $TV(\bar\rho)$. To achieve the estimates, we measure the distance
among successive approximations by means of  tangent vectors. The latter represents distances among
discontinuities (shocks or rarefaction shocks) for car density, and  the difference of positions for moving bottleneck.

The second algorithm, called GOF, is a fractional step method combining the classical Godunov method
\cite{GO} for (\ref{lwr}) with an exact method for (\ref{ode}). There are two reasons for using the Godunov method:
the easy implementation 
and the importance of this method in the transportation literature \cite{Leb,LLG}.

The GOF method is then extended to the case of multiple bottlenecks. For that, we consider two models. The first allows slow vehicles to pass each other, while the second inhibits such a possibility.
The latter is more challenging from theoretical and numerical point of view, thus we focus on this case. Both methods are useful to deal, for instance, with the case of many buses on the same urban route \cite{GLM11}.

\paragraph{Paper organization.}
In Section \ref{sec:basic} we provide basic definitions and preliminary results.
The theoretical scheme WFT-ODE is introduced in Section \ref{sec:WFT-ODE},
while convergence results are given in Section \ref{sec:WFT-ODEconvergence}.
The second scheme GOF is described in Section \ref{sec:GOF}.
Then we deal with multiple bottlenecks in Section \ref{sec:multi}.
Finally, numerical simulations are reported in Section \ref{sec:simulations}.

\section{Basic definitions and preliminary results}\label{sec:basic}
In this section we provide the definition of solution to the system (\ref{lwr})--(\ref{ode})
and we detail the choice of the flux $f$ and the velocity $w$.
We also give a description of Wave Front Tracking (WFT) algorithm for the
case of a flux function depending on the  space variable, as it occurs for (\ref{lwr}).
Finally, a result on possible interactions of the moving bottleneck
with fronts in the WFT solution is proved. The latter will be extensively used in the rest of the paper.

We denote by $\mathcal{BV}(\R)$ the space of functions with bounded variation.
For every $T>0$, the definition of solution to the system (\ref{lwr})--(\ref{ode}) reads as follows.

\begin{definition}\label{def:sol}
A vector $(\rho,y)$ is a solution to (\ref{lwr})--(\ref{ode}) in
$[0,T]$ if
$\rho(t,\cdot)\in\mathcal{BV}(\R)$ for
a.e.\ $t\in [0,T]$
which solves the PDE in the sense of distributions, that is
\begin{equation*}
\int_{0}^{T}\int_{\R}
\Big(\rho(t,x)\partial_t\phi(t,x)+f(\rho,x-y(t))\partial_x\phi(t,x)\Big)dxdt +
\int_{\R}\bar\rho\phi(0,x)dx=0,
\end{equation*}
for any differentiable function $\phi(t,x)$ with compact support in
$t\geq
0$.
Moreover, the position of the moving bottleneck $y(t)$ solves the ODE in $[0,T]$
in the sense of Filippov, namely $y(t)$ is an absolutely continuous
function such that $y(0) = y_0$ and
\begin{equation*}
\dot y\in F(\rho)=\cco\{w(\rho):\rho\in\mathcal{I}[\rho(t,y(t)-),\rho(t,y(t)+)]\}
\end{equation*}
for a.e.\ $t\in [0,T]$, where the set $\mathcal{I}[a,b]$ is defined
as the smallest interval containing $a$ and $b$.
\end{definition}

We assume hereafter that the maximal density of cars is $\rho_{max}$=1. We denote by $w(\rho)$ the velocity of the moving bottleneck and we assume that
\bee \label{vel_bus}
w(\rho)=w_{max}(1-\rho)
\ede
for some constant $w_{max}>0$. We denote by $v(\rho,x-y)$ the velocity of cars and we assume that
\bee \label{flusso}
f(\rho,x-y)= \rho v(\rho,x-y) ~\textrm{ and }~ v(\rho,x-y)=\varphi(x-y)(1-\rho)\,,
\ede
where $\varphi\in C^1(\R;(0,+\infty))$ and it is strictly decreasing in $(-\beta,0]$ and strictly increasing in $[0,\beta)$ for some $\beta>0$.
We also assume that there exist two positive constants
$\vmin$, $\vmax$ such that
\bee \label{eq:vmax}
0<\vmin\leq \varphi(\zeta)\leq \vmax\quad \textrm{for\ any\ }\zeta\in\R
\ede
and that $\vmin$ is reached for $\zeta=0$ and $\vmax$ is reached for any $\zeta\in\R\backslash[-\beta,\beta]$, see Figure\ \ref{fig:phi}(a).

\begin{figure}
\begin{center}
\begin{tabular}{cc}
\includegraphics[height=5cm]{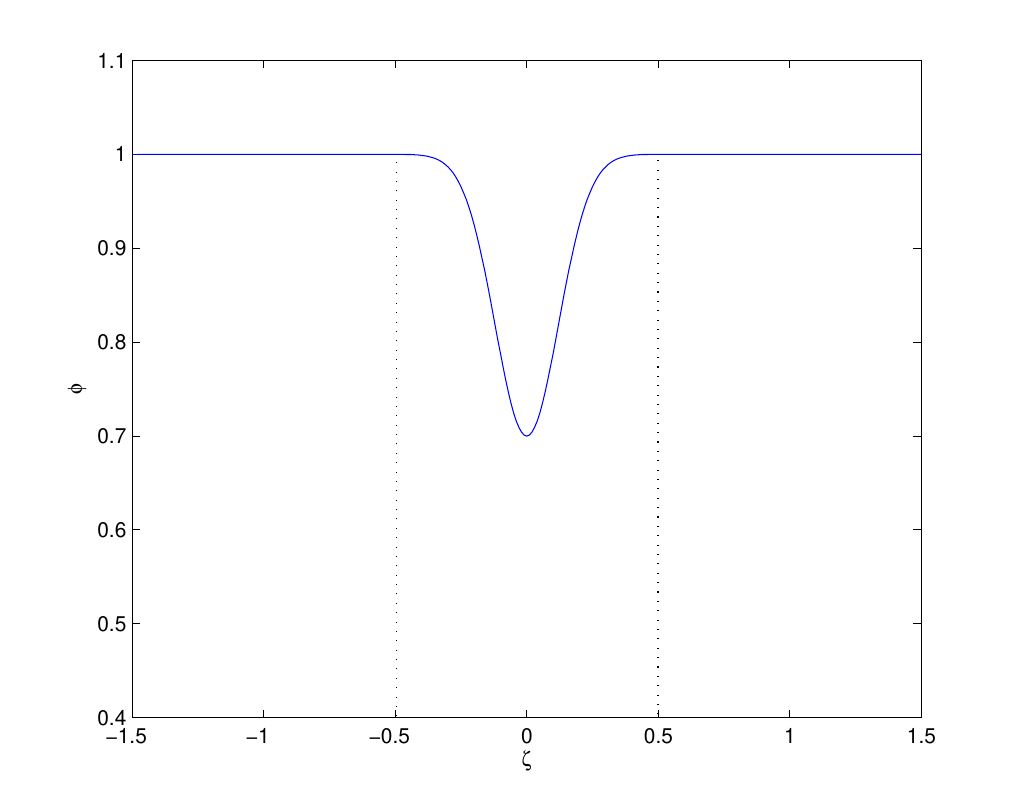} &
\includegraphics[height=5cm]{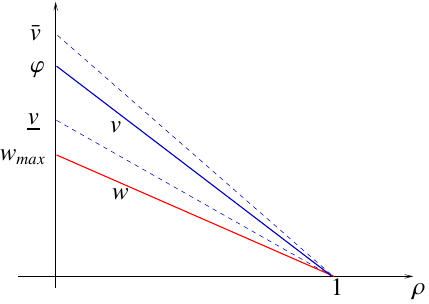} 
\\
(a) & (b)
\end{tabular}
\end{center}
\caption{(a) A possible choice for $\varphi(\zeta)$ with $\beta=0.5$, $\underbar v =0.7$, $\vmax =1$. (b) Cars' and moving bottleneck's velocity as a function of $\rho$ (for a fixed $\zeta=x-y$).}
\label{fig:phi}
\end{figure}

Note that, in particular $\varphi'(\zeta)$ is compactly supported and, for any fixed density $\rho$, the minimal velocity of cars $\vmin$ is taken when they overtake the moving bottleneck, i.e.\ for $x=y$.
We finally assume
\begin{equation}\label{vmaxmin>vbmax}
\vmin>w_{max}.
\end{equation}
The last assumption is crucial for the model: it means that \textit{cars can overtake the bottleneck}, see Figure\ \ref{fig:phi}(b).

\subsection{Preliminaries for Wave-Front Tracking algorithms}\label{WFT}

As it is well known, a solution to the Cauchy problem of a conservation law can be constructed using the WFT algorithm.
Roughly speaking, first a $\mathcal{BV}$ initial datum is approximated
by a piecewise constant function via sampling (thus with a smaller $\mathcal{BV}$ norm).
For small times, a piecewise constant weak solution is obtained by piecing together
solutions to Riemann problems, where rarefactions are replaced by a fan of rarefaction shocks.
Then, when waves interact, a new Riemann problem is generated and solved
again approximating rarefactions by rarefaction shocks' fans and so on.
For details we refer the reader to
\cite{B} for the general theory and to \cite{GPbook} for the case of networks.

In our case the flux does not depend only on $\rho$, but also on $x$ and $t$ (via $y(t)$), so the WFT algorithm needs to be modified.
Moreover, we will evolve, at the same time, the position of the bottleneck $y(t)$
according to equation (\ref{ode}).
In order to perform the construction we first need a preliminary result
comparing the speed of the bottleneck with that of neighboring waves of the
WFT approximation of $\rho$.
\begin{lemma}\label{lemma3p1}
Assume (\ref{vel_bus})--(\ref{vmaxmin>vbmax}) hold true.
Assume that the discontinuity $(\rho_l,\rho_r)$, located at $x$, is the
closest to the bottleneck position $y$ and there exists
\begin{equation}\label{eq:cond:keylemma}
\rho_{min}>\frac{\vmax-w_{max}}{2 \vmax-w_{max}}
\end{equation}
such that $\rho_l,\rho_r \geq\rho_{min}$.
Define
\begin{equation}\label{eq:def-mu}
\mu= \frac{(2 {\underbar{v}} - w_{max})}{2}
\left( \rho_{min}-\frac{\vmax-w_{max}}{2 \vmax-w_{max}}\right),
\end{equation}
then
\begin{equation}\label{tesiLemma3p1}
w(\rm)>\lambda+\mu \quad \textrm{ and } \quad w(\rp)>\lambda+\mu.
\end{equation}
\end{lemma}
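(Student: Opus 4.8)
The plan is to reduce $(\ref{tesiLemma3p1})$ to one elementary inequality and then split into two cases according to the sign of $1-\rho_l-\rho_r$. First I would write the speed of the front explicitly: in the Wave Front Tracking construction every front — whether a shock or a rarefaction shock — is a jump of a piecewise constant weak solution and thus travels (automatically for genuine shocks, and by the usual convention for rarefaction shocks) at the Rankine--Hugoniot speed, so, since $f(\rho,\zeta)=\varphi(\zeta)\rho(1-\rho)$, the front located at $x$ satisfies $\lambda=\varphi(x-y)(1-\rho_l-\rho_r)$. Hence $(\ref{tesiLemma3p1})$ amounts to proving $w_{max}(1-\rho_i)-\varphi(x-y)(1-\rho_l-\rho_r)>\mu$ for $i\in\{l,r\}$. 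The only quantity here not pinned down by the hypotheses is $\varphi(x-y)\in[\vmin,\vmax]$ (i.e.\ the position of the bottleneck inside $\mathrm{supp}\,\varphi'$), which dictates the case split.

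If $1-\rho_l-\rho_r\ge 0$, I would estimate $\varphi(x-y)(1-\rho_l-\rho_r)\le\vmax(1-\rho_l-\rho_r)$, write $1-\rho_l-\rho_r=(1-\rho_i)-\rho_j$ with $j$ the index other than $i$, and get the lower bound $(w_{max}-\vmax)(1-\rho_i)+\vmax\rho_j$. Since $w_{max}-\vmax<0$ (this is where $(\ref{vmaxmin>vbmax})$ enters), $1-\rho_i\le 1-\rho_{min}$ and $\rho_j\ge\rho_{min}$, this is at least $(w_{max}-\vmax)(1-\rho_{min})+\vmax\rho_{min}$, which a one-line rearrangement turns into $(2\vmax-w_{max})\bigl(\rho_{min}-\tfrac{\vmax-w_{max}}{2\vmax-w_{max}}\bigr)$. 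By $(\ref{eq:cond:keylemma})$ the bracket is strictly positive, and since $2\vmax-w_{max}\ge 2\vmin-w_{max}>\tfrac{1}{2}(2\vmin-w_{max})>0$, comparison with $(\ref{eq:def-mu})$ gives a quantity strictly larger than $\mu$.

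If instead $1-\rho_l-\rho_r<0$, the same manipulation works using $\varphi(x-y)(1-\rho_l-\rho_r)\le\vmin(1-\rho_l-\rho_r)$ (a larger positive multiplier makes a negative product smaller), and one arrives at the lower bound $(2\vmin-w_{max})\bigl(\rho_{min}-\tfrac{\vmin-w_{max}}{2\vmin-w_{max}}\bigr)$. The point to flag is the monotonicity of $v\mapsto\tfrac{v-w_{max}}{2v-w_{max}}$ on $(w_{max},\infty)$ — its derivative equals $w_{max}/(2v-w_{max})^2>0$ — which yields $\tfrac{\vmin-w_{max}}{2\vmin-w_{max}}\le\tfrac{\vmax-w_{max}}{2\vmax-w_{max}}$, so the bracket is at least the one in $(\ref{eq:cond:keylemma})$ and hence positive; together with $2\vmin-w_{max}>\tfrac{1}{2}(2\vmin-w_{max})>0$ this again beats $\mu$. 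Since both signs and both choices $i=l,r$ are covered, $(\ref{tesiLemma3p1})$ follows.

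I expect the main difficulty to be organizational rather than substantive. One must resist the temptation to bound $\lambda$ by $\varphi(x-y)(1-2\rho_{min})$ before splitting, since that estimate is too lossy when one of the states is close to $1$: there $w_{max}(1-\rho_i)$ is near zero and one genuinely needs $\lambda$ to be strongly negative, which is only visible keeping \emph{both} $\rho_l,\rho_r\ge\rho_{min}$. And one must notice that $v\mapsto\tfrac{v-w_{max}}{2v-w_{max}}$ is increasing, which is exactly why $(\ref{eq:cond:keylemma})$ is phrased with $\vmax$ (the worse of the two thresholds) and why the single constant $\mu$ in $(\ref{eq:def-mu})$ serves both signs simultaneously; the rest is routine bookkeeping.
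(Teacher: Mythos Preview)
Your argument is correct. You compute $\lambda=\varphi(x-y)(1-\rho_l-\rho_r)$ and split on the sign of $1-\rho_l-\rho_r$, first optimizing over $\varphi\in[\vmin,\vmax]$ and then over $\rho_l,\rho_r\ge\rho_{min}$; the monotonicity of $z\mapsto\tfrac{z-w_{max}}{2z-w_{max}}$ is used exactly as needed, and the factor $\tfrac12$ in \eqref{eq:def-mu} is what makes the strict inequality work in both cases.

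The paper organizes the proof differently. It splits on the \emph{wave type} (shock, i.e.\ $\rho_r>\rho_l$, versus rarefaction shock, i.e.\ $\rho_r=\rho_l-\delta_\nu$), keeps $\varphi(\zeta)$ symbolic while first locating the worst case in $(\rho_l,\rho_r)$, and only then maximizes the function $\psi(z)=\tfrac{z-w_{max}}{2z-w_{max}}$ over $z=\varphi(\zeta)\in[\vmin,\vmax]$; having an ordering between $\rho_l$ and $\rho_r$, it proves only one of the two inequalities in \eqref{tesiLemma3p1} and deduces the other. Your case split by $\mathrm{sgn}\,\lambda$ is more symmetric: it treats $i\in\{l,r\}$ uniformly, never needs to know which state is larger, and in particular avoids introducing the rarefaction-fan parameter $\delta_\nu$ altogether. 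The paper's split, on the other hand, mirrors the WFT construction more directly. Either way the same monotonicity of $\psi$ is the crux, and the final quantitative constant is the same.
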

We remark that
$$
\frac{\vmax-w_{max}}{2 \vmax-w_{max}} \in \left (0,\frac 1 2
 \right ).
$$
\begin{proof}
Recall that the discontinuity travels with speed:
	\bee\label{rh}
	\lambda(\rp,\rm,\zeta)=\frac{\rp v(\rp,\zeta)-\rm v(\rm,\zeta)}{\rp-\rm}.
	\ede
We divide the proof in two cases:
\begin{enumerate}
\item The wave is a shock wave.
\item The wave is a rarefaction-shock wave.
\end{enumerate}
1.
It must be $\rp>\rm$. Using (\ref{rh}), to prove inequalities in (\ref{tesiLemma3p1}) we consider the following:
\begin{equation}\label{lemma3p1_casoshock}
w_{max}(1-\rm)>\frac{\varphi(\zeta)\rp(1-\rp)-\varphi(\zeta)\rm(1-\rm)}{\rp-\rm}+\mu\,,
\end{equation}
and
\begin{equation}\label{lemma3p1_casoshock_bis}
w_{max}(1-\rp)>\frac{\varphi(\zeta)\rp(1-\rp)-\varphi(\zeta)\rm(1-\rm)}{\rp-\rm}+\mu.
\end{equation}
Let us first prove (\ref{lemma3p1_casoshock_bis}). We notice that
\begin{align*}
\varphi(\zeta)\Big(\rp(1-\rp)-\rm(1-\rm)\Big)=&\varphi(\zeta)\Big(\rp-\rp^2-\rm+\rm^2\Big)\\
= &\varphi(\zeta)(\rp-\rm)(1-(\rm+\rp)).
\end{align*}
Thus, \eqref{lemma3p1_casoshock_bis} can be written as
	\begin{displaymath}
	w_{max}(1-\rp) > \varphi(\zeta)(1-(\rm+\rp))+\mu
	\end{displaymath}
and then as
	\begin{displaymath}
	\varphi(\zeta)\rm>(1-\rp)(\varphi(\zeta)-w_{max})+\mu.
	\end{displaymath}
Since from \eqref{eq:vmax} and \eqref{vmaxmin>vbmax} we have $\varphi(\zeta)\geq\vmin>w_{max}$, the worst case is when $\rm=\rho_{min}$ and  $\rp=\rho_{min}+\varepsilon$ ($\varepsilon>0$ arbitrarily small):
	\begin{displaymath}
	\varphi(\zeta)\rho_{min}>(1-(\rho_{min}+\varepsilon))(\varphi(\zeta)-w_{max})+\mu;
	\end{displaymath}
then it is sufficient to prove
	\begin{displaymath}
	\rho_{min}> (1-\varepsilon) \frac{\varphi(\zeta)-w_{max}}{2 \varphi(\zeta)-w_{max}}+\frac{\mu}{2\vmin-w_{max}}.
	\end{displaymath}
If we define $\psi(z)=\frac{z-w_{max}}{2 z-w_{max}}$, with $z=\varphi(\zeta) \in [\vmin,\vmax]$, we find that $\psi'(z) >0$, and consequently:
	\bee\label{psi}
	\max_{z \in [\scriptsize \vmin \normalsize ,\vmax]}{\psi(z)}= \psi(\vmax)=\frac{\vmax-w_{max}}{2 \vmax-w_{max}}.
	\ede
Therefore, in the worst case (taking also $\varepsilon=0$)
	\begin{displaymath}
	\rho_{min}>  \frac{\vmax-w_{max}}{2 \vmax-w_{max}}+\frac{\mu}{2\vmin-w_{max}} ,
	\end{displaymath}
which is true for $\mu$ defined as in (\ref{eq:def-mu}). Since $\rp>\rm$, (\ref{lemma3p1_casoshock_bis}) implies (\ref{lemma3p1_casoshock}) and then we conclude the proof.
\vskip0.2cm
\noindent 2. In the case of rarefaction-shock we have $\rho_r = \rho_\ell - \delta_\nu$, where $0<\delta_\nu<\frac1{\nu}$ is a parameter that will be defined
by the WFT algorithm.
We only prove the inequality in (\ref{lemma3p1_casoshock}), then (\ref{lemma3p1_casoshock_bis}) follows because $\rm>\rp$. The inequality in \eqref{lemma3p1_casoshock} can be rewritten as
	\begin{displaymath}
	w_{max}(1-\rp)>\frac{\varphi(\zeta)\rp(1-\rp)-\varphi(\zeta)\rm(1-\rm)}{\rp-\rm}
	 + w_{max}\delta_\nu+\mu.
	\end{displaymath}
Reasoning as above, we have
	\begin{displaymath}
	\varphi(\zeta)\rm>(1-\rp)(\varphi(\zeta)-w_{max})+ w_{max}\delta_\nu+\mu.
	\end{displaymath}
Now the worst case occurs when $\rp=\rho_{min}$, $\rm=\rho_{min}+\delta_\nu$:
	\begin{displaymath}
	\varphi(\zeta)(\rho_{min}+\delta_\nu)>(1-\rho_{min})(\varphi(\zeta)-w_{max})
	+ w_{max}\delta_\nu+\mu,
	\end{displaymath}
i.e.
	\begin{align}\label{rhomin2}
	\rho_{min}> &\frac{\varphi(\zeta)-w_{max}}{2 \varphi(\zeta)-w_{max}} +
	\frac{\delta_\nu(w_{max} - \varphi(\zeta))}{2 \varphi(\zeta)-w_{max}}
	+\frac{\mu}{2\vmin-w_{max}}\nonumber\\
	&=\frac{(1-\delta_\nu)(\varphi(\zeta)-w_{max})}{2 \varphi(\zeta)-w_{max}}
	+\frac{\mu}{2\vmin-w_{max}}.
	\end{align}
Then we conclude as in step 1.
\end{proof}

The WFT algorithm will need to be further split because of source effect
due to the $x$ dependence of the flux. Indeed, consider a general equation of the type
\begin{equation}\label{eq}
\partial_t\rho(t,x)+\partial_x \Big[g(\rho(t,x),x) \Big]=0,
\end{equation}
where we assume
$$
\textrm{\textbf{(H1)}}\qquad
\begin{array}{l}
g \textrm{ is Lipschitz continuous in both variables}; \\
\textrm{for every } \rho, ~g(\rho,\cdot)\in C^2(\R);  \\
\max\limits_{\rho\in[0,1]} \int_\R|\partial_{xx}g(\rho,x)|dx<+\infty; \\
\textrm{for every } x, ~g(\cdot,x) \textrm{ is concave}.
\end{array}
$$
Therefore, formally deriving in space the flux function $g$ depending both on the density of cars $\rho(t, x)$ and on the position of cars $x$, one gets:
\begin{equation}\label{lc_con_g}
\partial_x [g(\rho,x)]=\partial_\rho g ~ \partial_x \rho + \partial_x g\,,
\end{equation}
so that equation (\ref{eq}) can be written as a conservation law with source term
\begin{equation}\label{lc+st}
\partial_t \rho + \partial_\rho g ~\partial_x \rho = - \partial_x g.
\end{equation}
In our case we have
$$
g(\rho,x;y)=\rho\varphi(x-y)(1-\rho)
$$
and the source term reads
$$
-\partial_x g(\rho,x;y)=-\rho(1-\rho)\partial_x\varphi(x-y).
$$
(see Figure\ \ref{fig:mgx}).
\begin{figure}[h!]
    \begin{center}
    \includegraphics[height=5cm]{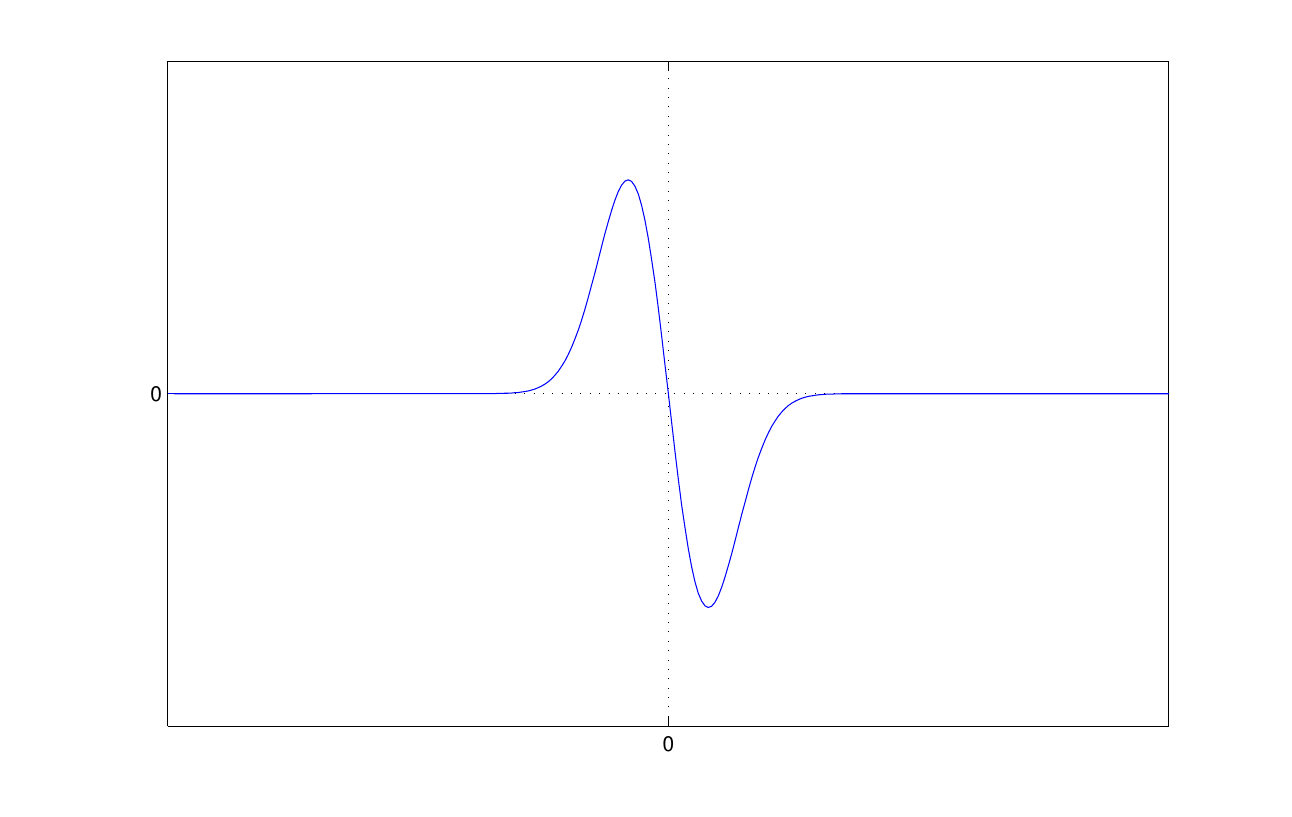}
	\caption{Shape of the  term $-\partial_x\varphi(x-y)$ as a function of $\zeta=x-y$.}
	\label{fig:mgx}
    \end{center}
\end{figure}
Note that the assumptions \textbf{(H1)} are satisfied and that the $x$ dependence in the flux is indeed effective only  for $\zeta\in(-\beta, \beta)$, that is when cars are close to the bottleneck's position.

Equation (\ref{eq}) can be solved applying an operator splitting method in which one alternates between solving the homogeneous conservation law
\begin{equation}\label{wft_pdeomogenea}
\partial_t\rho+\partial_{\rho} g ~\partial_x\rho =0
\end{equation}
and the ordinary differential equation
\begin{equation}\label{wft_ode}
\partial_t\rho = -\partial_x g.
\end{equation}

\section{The semi-discrete WFT-ODE scheme and its properties}\label{sec:WFT-ODE}
We are now ready to define the WFT-ODE scheme.
In this section we describe a semi-discrete scheme for the coupled dynamics (\ref{lwr})--(\ref{ode}).
The idea is to combine the WFT method for the conservation laws with exact solution
to the ODE of the moving bottleneck. Indeed, since the approximate solution given
by WFT is piecewise constant (in time and space), we can get a simple ODE with piecewise constant (in time) right-hand side
for the moving bottleneck.\\
\indent Let us now describe in detail the WFT-ODE scheme.
Fix $T>0$, then
for every $\nu\in\N$ and $\Delta t>0$,
we define an approximate solution $(\rho_\nu,y_\nu)$ on the time interval $[0,T]$
via the following steps.
 
\begin{itemize}
\item[Step 1.]
Let $\bar\rho\in\mathcal{BV}(\R)$ the initial datum.
We define a piecewise constant function $\bar\rho_\nu$ with $N$ discontinuities $s_1<\cdots<s_N$, by approximating $\bar\rho$ as follows (Figure 3):
$$
\rho_\nu(x)=\bar\rho(j2^{-\nu}), \quad x\in[j2^{-\nu},(j+1)2^{-\nu}]\cap[-\nu,\nu], \quad j\in\mathbb Z, \quad \nu\in\mathbb N.
$$

\begin{figure}[h!]
\begin{center}
\begin{overpic}
[width=0.65\textwidth]{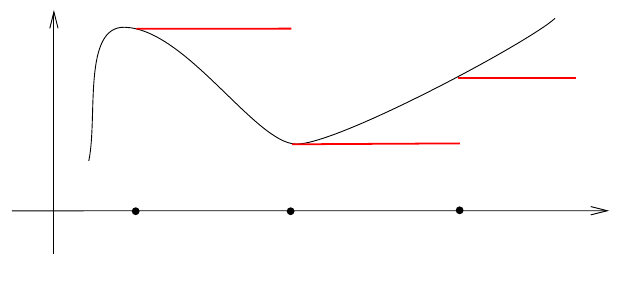}
\put(14,10){$(j-1)2^{-\nu}$} \put(43,10){$j2^{-\nu}$}  \put(68,10){$(j+1)2^{-\nu}$}
\put(40,45){$\bar\rho_\nu$} \put(35,35){$\bar\rho$} \put(98,10){$x$}
\end{overpic}
\end{center}
\caption{Sampling $\bar\rho_\nu$ for WFT.}
\label{fig:samplenu}
\end{figure}

Notice that $TV(\bar \rho_\nu)\le TV(\bar \rho)$.

\item[Step 2.]
At time $t=0$, we solve all Riemann problems at each discontinuity point of $\bar\rho_{\nu}$ for the homogeneous equation (\ref{wft_pdeomogenea}) and
approximate every rarefaction wave with a rarefaction fan, formed by
rarefaction shocks (non-entropic shocks) of strength less than $\delta_\nu$ travelling with speed given by the Rankine-Hugoniot condition.\\
More precisely: Assume, for simplicity, that $y_0\in ]s_j,s_{j+1}[$ for some $j$
(i.e.\ $y_0\not= s_i$ for every $i=1,\ldots,N$.)
For small times we can get an approximate solution by solving the equations:
\[
\dot s_i=\varphi(s_i(t)-y(t))
\frac{\rho(t,s_i(t)-)(1-\rho(t,s_i(t)-))-
\rho(t,s_i(t)+)(1-\rho(t,s_i(t)+))}
{\rho(t,s_i(t)-)-\rho(t,s_i(t)+)}
\]
for the discontinuities $s_i$ of $\rho_\nu$ and the equation:
\[
\dot y(t)=w(\rho(t,s_j+)),
\]
for the bottleneck, which gives $y(t)=y_0+tw(\rho(t,s_j+))$.\\
If two wave fronts $s_i$ and $s_{i+1}$ interacts, then we solve the
new Riemann problem. Because of Lemma \ref{lemma3p1},
$s_j(t)<y(t)$ for every $t\in[0,\Delta t]$.
If $y(\bar t)=s_{j+1}(\bar t)$ for $\bar t\in[0,\Delta t[$, then the bottleneck interacts
at $\bar t$ with the wave $s_{j+1}$ and we simply change the equation
for the bottleneck to:
\[
\dot y(t)=w(\rho(t,s_{j+1}+)),
\]
and so on for all the possible interactions with $\rho_\nu$ waves up to time
$\Delta t$.\\
Finally, possibly slightly modifying the wave speeds, we can assume that $y_\nu(\Delta t) \neq s_i(\Delta t)$ for any $j$.

\item[Step 3.]
To take into account the source term $\rho_\nu(\Delta t,x)$ is updated in order to include the effect of the source term.
This is done by means of one step of the explicit Euler method approximating the ODE (\ref{wft_ode}) by:
\begin{equation}\label{updatedsourceterm}
\rho_\nu^{source}(\Delta t,x)=\rho_\nu(\Delta t,x)+\Delta t (-\partial_x g(\rho_\nu(\Delta t,x),x))\quad \textrm{for any } x\in\R .
\end{equation}
After this modification the function $\rho_\nu^{source}$ is no longer piecewise constant and we need to apply a specific sampling to ensure accurate estimates of continuous dependence.

\item[Step 4.]
Let $\bar y=y(\Delta t)$.
We update $\rho_\nu^{source}$ by sampling only on the interval $[\bar y-\beta,\bar y+\beta]$ (where $\rho^{source}$ is not piecewise constant.)
Let $x_i$, $i=1,\ldots,M$,  be the point of discontinuity of $\rho_\nu^{source}$ on
$[\bar y-\beta,\bar y+\beta]$.
First we insert the points $y_j=\bar y-\beta+j\,\frac{2\beta}{2^\nu}$,
$j=1,\ldots , 2^\nu$. Moreover, if there exists $i$ such that
no $y_j$ is in the interval $]x_i,x_{i+1}[$, then we insert
the additional point $y_{i}=\frac{x_i+x_{i+1}}{2}$.
Again possibly slightly modifying the wave speeds we can assume that
$x_i\not= y_j$ for every $i$ and $j$.
We define $\rho^{sampled}$ by
sampling $\rho^{source}$ on subintervals generated by the partition:
$P=\{x_i\}\cup\{y_j\}\cup \{y_i\}$ with the following rules:\\
- On intervals $[x_i,y_j]$ we define
$\rho_\nu^{updated}=\rho_\nu^{source}(x_i+)$.\\
- On intervals $[y_j,x_{i+1}]$ we define
$\rho_\nu^{updated}=\rho_\nu^{source}(x_{i+1}-)$.\\
- On all other intervals $[a,b]$
we define $\rho_\nu^{updated}=\rho_\nu^{source}(a+)$.\\
The rationale behind this definition is the following.
To obtain neat estimates of increase in jumps size of shocks and rarefaction shocks
we need always to sample  to the left and right of the discontinuities
which were generated by the WFT step. In the sequel, we shall refer to these new generated (at positive time) waves  as $s_{b}$ waves, being generated by the source term due to the bottleneck.
\\

We can restart the procedure at step 2 in the interval $[\Delta t, 2\Delta t]$ and so on.
\end{itemize}

It is important to note that, while the sampling procedure does not increase the total variation, the updating step (\ref{updatedsourceterm}) can do it. For the first time step,
the increment can be estimated easily as follows. Let us denote by $\omega(x;\rho_\nu,\Delta t)$ the contribution given by the source term at time $\Delta t$, i.e.\
$$\omega(x;\rho_\nu,\Delta t):=-\Delta t \partial_x g(\rho_\nu(\Delta t,x),x).$$
Recalling the assumption \textbf{(H1)}, we have
\begin{eqnarray}
TV(\rho_\nu^{updated})
&=&\sup_{\left\{\textrm{partitions }\{x_j\}_j \textrm{ of } \R\right\}}\sum_{j}|\rho_\nu(\Delta t,x_j)-\rho_\nu(\Delta t,x_{j-1})+\omega(x_j;\rho_\nu,\Delta t)-\omega(x_{j-1};\rho_\nu,\Delta t)|
 \nonumber \\ \nonumber
&\leq &\sup_{\left\{\textrm{partitions }\{x_j\}_j \textrm{ of } \R\right\}}\sum_{j}|\rho_\nu(\Delta t,x_j)-\rho_\nu(\Delta t,x_{j-1})|+\int_\R|\omega'(x;\rho_\nu,\Delta t)|dx.
\end{eqnarray}
Then, at every time step the additional total variation is bounded by $\max\limits_{\rho}\|\omega'(\cdot ;\rho,\Delta t)\|_{L^1}$,
and at final time $T$, namely after $\frac{T}{\Delta t}$ time steps, the total increment is bounded by  $T \max\limits_{\rho}\|\partial_{xx}g(\rho, \cdot) \|_{L^1}$. More precisely we have the following:
\begin{lemma}\label{le:step3-tv-est}
The increase of total variation in $\rho_\nu$ at Step 3 is bounded by
$K_\varphi\ \Delta t$ where
\begin{equation}\label{additionalTVduetosourceterm}
K_\varphi : =\max_{\rho\in[0,1]}\left\{ \rho(1-\rho)\int_\R|\partial_{\zeta\zeta}\varphi(\zeta)|d\zeta\right\}
=
\frac{1}{4}  \int_\R|\partial_{\zeta\zeta}\varphi(\zeta)|d\zeta.
\end{equation}
\end{lemma}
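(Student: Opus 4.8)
The plan is to make rigorous the computation sketched just before the statement. Step~3 is the map $\rho_\nu(\Delta t,\cdot)\mapsto\rho_\nu^{source}(\Delta t,\cdot)=\rho_\nu(\Delta t,\cdot)+\omega$, where, by \eqref{updatedsourceterm} and the explicit form of $g$, $\omega(x)=-\Delta t\,\partial_x g(\rho_\nu(\Delta t,x),x)=-\Delta t\,\rho_\nu(\Delta t,x)\,(1-\rho_\nu(\Delta t,x))\,\varphi'(x-y)$; I would then bound $TV(\rho_\nu^{source})-TV(\rho_\nu(\Delta t,\cdot))$. The key point is that $\rho_\nu(\Delta t,\cdot)$ is piecewise constant: denoting by $x_1<\dots<x_M$ its jumps and by $c_i$ its value on $(x_i,x_{i+1})$, I split the total variation of $\rho_\nu^{source}$ into the contribution of the open intervals of constancy and the contribution of the jumps $\{x_i\}$.

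On $(x_i,x_{i+1})$ the function $\rho_\nu^{source}$ equals $c_i-\Delta t\,c_i(1-c_i)\,\varphi'(\cdot-y)$, which is $C^1$ since $\varphi\in C^2$ by \textbf{(H1)}; hence its variation there is $\Delta t\,c_i(1-c_i)\int_{x_i}^{x_{i+1}}|\partial_{\zeta\zeta}\varphi(x-y)|\,dx$. Summing over $i$ and using $c_i(1-c_i)\le\frac14$ bounds this part by $\Delta t\,\frac14\int_\R|\partial_{\zeta\zeta}\varphi(\zeta)|\,d\zeta$; since $\max_{\rho\in[0,1]}\rho(1-\rho)=\frac14$, attained at $\rho=1/2$, this equals $K_\varphi\Delta t$ with $K_\varphi$ as in \eqref{additionalTVduetosourceterm}. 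Equivalently, on each interval of constancy $|\omega'(x)|=\Delta t\,\rho_\nu(1-\rho_\nu)\,|\partial_{\zeta\zeta}\varphi(x-y)|\le\frac14\Delta t\,|\partial_{\zeta\zeta}\varphi(x-y)|$, whence $\int_\R|\omega'|\,dx\le K_\varphi\Delta t$, which is exactly the estimate announced before the statement.

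It remains to handle the jumps, and this is where I expect the real work to be. At $x_i$ the jump of $\rho_\nu^{source}$ equals $(c_i-c_{i-1})\bigl(1-\Delta t\,\varphi'(x_i-y)(1-c_i-c_{i-1})\bigr)$, using $c_i(1-c_i)-c_{i-1}(1-c_{i-1})=(c_i-c_{i-1})(1-c_i-c_{i-1})$. Thus for $\Delta t$ small the size of an old jump does not increase when $\varphi'(x_i-y)(1-c_i-c_{i-1})\ge 0$, and otherwise it grows by exactly $\Delta t\,|\varphi'(x_i-y)|\,|c_i(1-c_i)-c_{i-1}(1-c_{i-1})|$, i.e.\ by the jump of $\omega$ at $x_i$. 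Absorbing this last quantity into $K_\varphi\Delta t$ is the crux, and the idea is to use that the relevant $x_i$ lie in $(y-\beta,y+\beta)$ and that $|\varphi'(x_i-y)|$ is controlled by $\int|\partial_{\zeta\zeta}\varphi|$ over the interval running from $x_i-y$ to the nearest zero of $\varphi'$ (one of $0,\pm\beta$), which is precisely the portion of $\int_\R|\partial_{\zeta\zeta}\varphi|$ left unused by the interval-of-constancy estimate on the side of $x_i$ where $\rho(1-\rho)$ is smaller --- this is exactly why Step~4 prescribes sampling to the \emph{left and right} of every WFT discontinuity. Combining the interval and jump contributions then yields the bound $K_\varphi\Delta t$; the concavity/subadditivity part is routine, while this last bookkeeping is the main obstacle.
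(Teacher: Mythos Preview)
The paper's entire argument for this lemma is precisely the computation displayed in the paragraph immediately \emph{before} the statement: it applies the triangle inequality $TV(\rho_\nu+\omega)\le TV(\rho_\nu)+TV(\omega)$ and then bounds $TV(\omega)$ by $\max_{\rho}\|\omega'(\cdot;\rho,\Delta t)\|_{L^1}$, i.e.\ by treating $\rho$ as a constant parameter when differentiating $\omega$ in~$x$. That is all; there is no separate treatment of the jumps of~$\omega$ inherited from the discontinuities of~$\rho_\nu$. Your first two paragraphs already reproduce this argument exactly and give the bound $K_\varphi\,\Delta t$ for the smooth part.

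Your third paragraph therefore goes \emph{beyond} what the paper actually does. You are right that, taken literally, the triangle-inequality step overlooks the jump contributions of~$\omega$: the quantity $\int_\R|\omega'|$ with $\omega'=-\Delta t\,\rho_\nu(1-\rho_\nu)\varphi''(\cdot-y)$ captures only the variation on the intervals of constancy, not the increments $\Delta t\,|\varphi'(x_i-y)|\,|c_i(1-c_i)-c_{i-1}(1-c_{i-1})|$ at the jump points~$x_i$. The paper simply does not address this; it records $K_\varphi\,\Delta t$ as the bound and moves on. Your attempt to recover exactly the constant~$K_\varphi$ by matching each jump increase against ``unused'' portions of $\int|\varphi''|$ on the neighboring intervals is an interesting sharpening, but as written it is only a heuristic (and you yourself flag it as the main obstacle). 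For the purposes of the paper --- where $K_\varphi$ enters only through the composite constant $K=\max\{K_\varphi,2\|\varphi'\|_\infty,w_{max}/\mu\}$ in~(\ref{eq:est:K}) --- the precise value of~$K_\varphi$ is immaterial, and a cruder bound of the form $C_\varphi\,\Delta t$ (absorbing the jump terms via $|\varphi'|\le\|\varphi'\|_\infty$ and $\sum_i|c_i-c_{i-1}|\le TV(\rho_\nu)$) would serve equally well for the convergence proof.
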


Bounds on the $\mathcal{BV}$ norm and on number of waves and interactions are directly obtained as in the scalar case,
when the flux is not depending on $x$, provided this dependence does not create resonance effects among waves. Indeed, we can rewrite a scalar conservation law with a $x$-dependent flux $g(\rho,x)$ as a $2\times2$ system as follows:
\begin{equation*}
\begin{cases}
\rho_t + g(\rho,z)_x = 0 & \\
z_t = 0. &
\end{cases}
\end{equation*}
The above system is strictly hyperbolic, and therefore standard WFT analysis may be carried out, provided
$\partial_\rho g(\rho, z)\neq 0$. In our case, $g(\rho, x;y(t)) = \rho(1-\rho)\varphi(x-y(t))$ and the above condition corresponds to the requirement that the bottleneck's position $y(t)$, where the $x$-dependence is effective as noticed before, travels with a speed uniformly different with the one of the closest $\rho$ waves.
This is precisely the statement of Lemma \ref{lemma3p1}.

The construction thus relies on Lemma \ref{lemma3p1}, which in turn is applicable
if $\rho_\nu>\frac{\vmax-w_{max}}{2 \vmax-w_{max}}$, see (\ref{eq:cond:keylemma}).
Due to the source term $-\partial_x g$ this is not guaranteed for all times
even if the initial data satisfies (\ref{eq:cond:keylemma}). However
we have the following:
\begin{lemma}\label{le:T-est}
Consider an initial datum $\bar{\rho}\in\mathcal{BV}(\R)$ and assume
$\eta=\min_x \frac{1}{\bar{\rho}(x)}\ \frac{\vmax-w_{max}}{2 \vmax-w_{max}}<1$.
Then  for all positive times $T$ such that:
\begin{equation}\label{eq:cond:T}
T< \frac{- \ln (\eta)}{\|\varphi'\|_{\infty}},
\end{equation}
the condition (\ref{eq:cond:keylemma}) of Lemma \ref{lemma3p1} is verified
for $\Delta t$ sufficiently small.
\end{lemma}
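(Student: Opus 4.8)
The plan is to track how the pointwise lower bound on the density can decrease under the explicit Euler source step, and to show that this decrease is controlled by an exponential factor governed by $\|\varphi'\|_\infty$. The key observation is that condition \eqref{eq:cond:keylemma} asks $\rho_\nu > \frac{\vmax-w_{max}}{2\vmax-w_{max}}$, and by hypothesis the initial datum satisfies the stronger bound $\bar\rho(x) \ge \frac{1}{\eta}\,\frac{\vmax-w_{max}}{2\vmax-w_{max}}$ with $\eta<1$, so there is room to lose a factor up to $\eta$ before the condition of Lemma \ref{lemma3p1} fails. First I would estimate, for a single time step, the worst-case multiplicative loss: since the source update is $\rho_\nu^{source}(\Delta t,x) = \rho_\nu(\Delta t,x) - \Delta t\,\rho(1-\rho)\,\partial_x\varphi(x-y)$ and $|\partial_x\varphi|\le \|\varphi'\|_\infty$, the new density at a given point satisfies $\rho_\nu^{source} \ge \rho_\nu(\Delta t,x)\big(1 - \Delta t\,(1-\rho)\|\varphi'\|_\infty\big) \ge \rho_\nu(\Delta t,x)\big(1 - \Delta t\,\|\varphi'\|_\infty\big)$, using $1-\rho\le 1$. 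The sampling step of Step 4 does not decrease the infimum of the density (it only selects values already attained), and the homogeneous WFT step of Step 2 preserves the range of $\rho$ by the maximum principle for scalar conservation laws, so the only source of decrease is Step 3.

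Next I would iterate this estimate: after $n = \lfloor t/\Delta t\rfloor$ steps, the running minimum of $\rho_\nu$ is bounded below by $\big(\min_x \bar\rho(x)\big)\,(1-\Delta t\,\|\varphi'\|_\infty)^n$. Writing $\rho^\ast := \frac{\vmax-w_{max}}{2\vmax-w_{max}}$, the condition \eqref{eq:cond:keylemma} remains valid at all times in $[0,T]$ provided
\begin{equation*}
\big(\min_x \bar\rho(x)\big)\,(1-\Delta t\,\|\varphi'\|_\infty)^{T/\Delta t} > \rho^\ast,
\end{equation*}
i.e.\ $(1-\Delta t\,\|\varphi'\|_\infty)^{T/\Delta t} > \eta$ (recalling $\eta = \rho^\ast/\min_x\bar\rho(x)$). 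Taking $\Delta t \to 0$, the left-hand side converges to $e^{-T\|\varphi'\|_\infty}$, so the strict inequality $e^{-T\|\varphi'\|_\infty} > \eta$ — which is exactly \eqref{eq:cond:T} after taking logarithms — guarantees that for $\Delta t$ small enough the discrete inequality holds as well, by continuity. One should also note that the estimate is uniform in $x$, so the same $\Delta t$ works simultaneously at every point, and in particular near the bottleneck where $\partial_x\varphi$ is actually nonzero; away from $[-\beta,\beta]$ the source term vanishes and there is nothing to prove.

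The main obstacle I anticipate is bookkeeping rather than conceptual: one must be careful that the sampling in Step 4 genuinely cannot lower the infimum of $\rho_\nu$ below that of $\rho_\nu^{source}$ — this is true because on every subinterval of the new partition $\rho_\nu^{updated}$ is assigned a one-sided limit of $\rho_\nu^{source}$, hence a value in the closed range of $\rho_\nu^{source}$ — and that the constant $1-\rho \le 1$ used to simplify the per-step factor is not wasteful enough to break the argument; since $\rho$ stays bounded below by a positive constant, one could even use the sharper factor $1-\rho \le 1-\rho^\ast$, but this is not needed. A secondary point is to make the passage from the discrete product to the exponential fully rigorous: since $\log(1-\Delta t\,\|\varphi'\|_\infty)^{T/\Delta t} = \frac{T}{\Delta t}\log(1-\Delta t\,\|\varphi'\|_\infty) \to -T\|\varphi'\|_\infty$ monotonically from below as $\Delta t\to 0^+$, for any $T$ satisfying \eqref{eq:cond:T} there is $\Delta t_0>0$ such that the required strict inequality holds for all $\Delta t < \Delta t_0$, which completes the proof.
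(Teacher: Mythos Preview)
Your proof is correct and follows essentially the same idea as the paper: both rely on the differential inequality $\partial_t\rho \ge -\|\varphi'\|_\infty\,\rho$ coming from the source term to obtain the exponential lower bound $\rho(t)\ge e^{-t\|\varphi'\|_\infty}\rho(0)$, and then compare with the threshold $\frac{\vmax-w_{max}}{2\vmax-w_{max}}$. The only difference is organizational: the paper first derives the bound for the continuous ODE \eqref{wft_ode} and then remarks that Step~3, being an Euler approximation of it, inherits the estimate for $\Delta t$ small, whereas you work directly with the discrete iteration $(1-\Delta t\,\|\varphi'\|_\infty)^{T/\Delta t}$ and pass to the limit --- your version is more explicit but the content is the same.
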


Notice that for $\bar{v}\to w_{max}$ we have $\eta\to 0$, thus $T\to \infty$.
In other words, if the maximal speed of cars is close the that of the moving bottleneck
then the WFT-ODE scheme is defined for large times.
\begin{proof}
Notice that solutions to the ODE linked to the source term (\ref{wft_ode}) satisfy
$\partial_t \rho=\rho(1-\rho)\partial_x\varphi\geq -\|\varphi'\|_{\infty} \rho$,
thus the solution satisfies $\rho(t)\geq e^{-t\|\varphi'\|_{\infty}} \rho(0)$.
This gives the estimate (\ref{eq:cond:T}). Since Step 3 is an approximation
of the source term, the estimate remains valid for $\Delta t$ sufficiently small.
\end{proof}

\section{Convergence of the WFT-ODE scheme}\label{sec:WFT-ODEconvergence}
In this section we prove the convergence of the WFT-ODE scheme.
The BV estimates on the WFT scheme allows to pass to the limit by standard
arguments. On the other side, the convergence of the the bottleneck position
requires additional work, in the same spirit of \cite{BP08}. However,
our case is more complex due to the complete coupling of the PDE for car density
and the ODE for the bottleneck position.
As in \cite{BP08}, to achieve this goal we use the technique of generalized tangent vectors \cite{B2, B3},
to the WFT approximate solutions $\rho_\nu$ and to the bottleneck position $y_\nu$.

More precisely, after introducing the tangent vector technique,
we consider $\rho_{\nu+1}$ as obtained from $\rho_\nu$ by shifts
of discontinuities, which generate generalized tangent vectors.
Then we need to estimate:\\
1) The increase in the norm of tangent vector to $\rho_\nu$ due to the WFT algorithm.\\
2) The increase in the norm of  tangent vector to $\rho_\nu$ due to the source term.\\
3) The increase in the norm of tangent vector to $y_\nu$ due to interactions with waves of $\rho_\nu$.

\subsection{Generalized tangent vectors}
The technique is based  on the idea of considering $L^1$ as
a Finsler manifold with generalized tangent vectors with appropriate norm.
We first  consider the subspace of piecewise
constant\index{piecewise constant} functions and ``generalized tangent vectors''
 consisting of two components $((v^\theta,\xi^\theta ),\eta^\theta)$, where $\xi^\theta$ describes the {infinitesimal}
 displacement of discontinuities and the scalar $\eta^\theta$ is the {infinitesimal} shift of the car trajectory. Let us take a family of piecewise constant functions $\{\rho ^{\theta }\}_{\theta \in [0,1]}$, each of
which has the same number of jumps, say at the points
$s_{1}^{\theta}<...<s_{N}^{\theta }$. Let us define the function
\begin{equation*}
v^{\theta }(x)\dot{=}\lim_{h\rightarrow 0}\frac{\rho
^{\theta +h}(x)-\rho ^{\theta }(x)}{h},
\end{equation*}
and also the quantities
\begin{equation*}
\xi _{k }^{\theta }\dot{=}\lim_{h\rightarrow 0}\frac{s_{k
}^{\theta +h}-s_{k }^{\theta }}{h},\qquad k =1,...,N,
\end{equation*}
and
\begin{equation*}
\eta ^{\theta }\dot{=}\lim_{h\rightarrow 0}\frac{y^{\theta +h}-y^{\theta }}{h}.
\end{equation*}

Note that $v^\theta$ is not defined if $x=s^\theta_k$, $k=1,\ldots,N$. Indeed, the contribution of the jumps to the tangent vector is given by $\{\xi^\theta_k\}_k$, which take into account the presence of the discontinuities.
Moreover, $v^\theta$ solve the linearized equation along the trajectory,
see \cite{B}
while the shifts change only at interactions times or, as detailed in next remark,
due to Step 3 in the WFT-ODE scheme.

\begin{remark}\label{rem:shftsb}
It is worth observing that, as a consequence of the shift $\eta ^{\theta}$ of the moving bottleneck's position, the waves $s_b$ generated in Step 4 of the scheme
will be all shifted by the same quantity.
\end{remark}

Then we say that the path $\gamma:\theta
\rightarrow (\rho ^{\theta },y^\theta)$ admits tangent vectors
$((v^{\theta},\xi ^{\theta }),\eta^{\theta })\in T_{\rho^\theta}\dot{=}
L^{1}(\mathbb{R},\mathbb{R}) \times \mathbb{R}^{N}\times \mathbb{R}$.
Note that in general such path is not differentiable w.r.t. the usual differential
\index{differential structure}
structure of $L^{1}$. 
One can compute the $L^{1}$-length of the path $\gamma$ in the following way:
\begin{equation}\label{eq:ch2:length}
\left\Vert \gamma \right\Vert
_{L^{1}}=\int\limits_{0}^{1}\left\Vert v^{\theta }\right\Vert
_{L^{1}}d\theta +\sum\limits_{k
=1}^{N}\int\limits_{0}^{1}\left\vert \rho^{\theta }(s_{k
}+)-\rho^{\theta }(s_{k }-)\right\vert \left\vert \xi _{k
}^{\theta }\right\vert d\theta +\int\limits_{0}^{1}|\eta^{\theta }|d\theta.
\end{equation}

According to (\ref{eq:ch2:length}), in order to compute the
$L^{1}$-length of a path $\gamma $, we integrate the norm of its
tangent vector which is defined as follows:
\begin{equation*}
\left\Vert ((v^\theta,\xi^\theta ),\eta^\theta)\right\Vert \dot{=}\left\Vert v^\theta\right\Vert
_{L^{1}}+\sum\limits_{k =1}^{N}\left\vert \Delta \rho^\theta_{k
}\right\vert \left\vert \xi^\theta_{k }\right\vert + |\eta^\theta|,
\end{equation*}
where
$\Delta \rho^\theta_{k }$ is the jump across the discontinuity $s^\theta_{k }$.
Notice that this is not the usual $L^1$ length of a path
since there is the additional component $\eta$.
\smallskip

\noindent Let us introduce the following definition.
\begin{definition}
We say that a continuous map
$\gamma :\theta \rightarrow (\rho^{\theta },y^\theta)\dot{= }\gamma (\theta )$
from $[0,1]$ into $L_{loc}^{1}\times\R$ is a regular path\index{regular path}
if the following holds.
All functions $\rho ^{\theta }$ are piecewise constant, with the same
number of jumps, say at $ s_{1}^{\theta }<...<s_{N}^{\theta }$ and
coincide outside some fixed interval $\left] -M,M\right[ $.
Moreover, $\gamma$ admits a generalized tangent vector
\index{generalized tangent vector}
$D\gamma (\theta )= ((v^{\theta },\xi ^{\theta }),\eta^\theta)\in T_{\gamma
(\theta )}=L^{1}(\mathbb{ R};\mathbb{R})\times\mathbb{R}^{N}\times \mathbb{R}$,
continuously depending on $\theta$.
\end{definition}

\subsection{WFT approximations with shift evolution}
Now, given two successive approximations of the initial data $\bar\rho$, namely $\bar\rho_\nu$ and $\bar\rho_{\nu+1}$, obtained by sampling, respectively, at lattice points of mesh $2^{-\nu}$ and $2^{-(\nu+1)}$, we can connect them by a regular path as follows (Figure 4):
\begin{eqnarray*}
& &\rho^\theta(x,t=0)= \\ \nonumber
& & =\left\{
 \begin{array}{lll}
 \bar\rho(j2^{-\nu}), \  x \in [j2^{-\nu}, j2^{-\nu} + 2^{-(\nu+1)}(1+\theta)),\\
  \bar\rho(j2^{-\nu} + 2^{-(\nu+1)}), \   x \in [j2^{-\nu} + 2^{-(\nu+1)}(1+\theta), (j+1)2^{-\nu}).
  \end{array}
   \right.
\end{eqnarray*}

\begin{figure}[h!]
\begin{center}
\begin{overpic}
[width=0.65\textwidth]{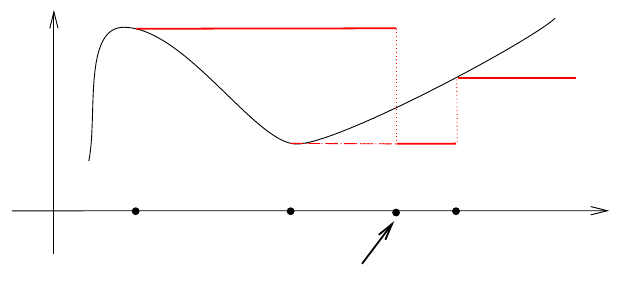}
\put(18,10){$j2^{-\nu}$} \put(68,10){$(j\!+\!1)2^{-\nu}$}
\put(37,10){$j2^{-\nu}\!+\!2^{-(\nu+1)}$} \put(48,1){$j2^{-\nu}\!+\!2^{-(\nu+1)}(1\!+\!\theta)$}
\put(40,45){$\rho^\theta$} \put(35,35){$\bar\rho$} \put(98,10){$x$}
\end{overpic}
\end{center}
\caption{The regular path $\rho^\theta(x,t=0)$, with $\theta\in(0,1)$.}
\label{fig:rho0nu}
\end{figure}

Notice that such path satisfies $\rho^0 = \bar\rho_{\nu+1}$ and $\rho^1 = \bar\rho_{\nu}$. Moreover, $\rho^\theta$ is differentiable everywhere
(possibly) except for $\theta = 0, 1$, where the number of jumps changes. Finally we have:
\begin{lemma}\label{lem:initialshifts}
The initial data $\bar{\rho}_\nu$ and $\bar{\rho}_{\nu+1}$ can be connected by a regular path
with shifts whose norm is bounded by $2^{-(\nu+1)}$.
\end{lemma}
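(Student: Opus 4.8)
The strategy is to compute the $L^1$-length of the explicit path $\rho^\theta(\cdot,0)$ defined just above the lemma, using the length formula (\ref{eq:ch2:length}), and then verify that this length is controlled by $2^{-(\nu+1)}$ uniformly in the sampling level $\nu$ and in the initial datum $\bar\rho$ (up to a multiplicative factor depending only on the total variation of $\bar\rho$, which I expect is the intended reading of ``bounded by $2^{-(\nu+1)}$'' — a bound of the form $C\cdot 2^{-(\nu+1)}$). The key observation is that along this path the only moving object is the single discontinuity sitting at $x=j2^{-\nu}+2^{-(\nu+1)}(1+\theta)$ inside each cell $[j2^{-\nu},(j+1)2^{-\nu})$: its position depends affinely on $\theta$, so the shift $\xi_j^\theta = \frac{d}{d\theta}\big(j2^{-\nu}+2^{-(\nu+1)}(1+\theta)\big) = 2^{-(\nu+1)}$ is constant in $\theta$ and in $j$. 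The function values on either side of that discontinuity do not depend on $\theta$, so $v^\theta \equiv 0$, and there is no bottleneck at $t=0$ yet (or, more precisely, $y^\theta$ is independent of $\theta$ at the initial time), hence $\eta^\theta = 0$.

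First I would write down the tangent vector $D\gamma(\theta) = ((v^\theta,\xi^\theta),\eta^\theta)$ explicitly: $v^\theta = 0$ a.e., $\xi_j^\theta = 2^{-(\nu+1)}$ for each active cell $j$, and $\eta^\theta = 0$. Next I would substitute into (\ref{eq:ch2:length}):
\begin{equation*}
\|\gamma\|_{L^1} = \sum_j \int_0^1 \big|\bar\rho(j2^{-\nu}+2^{-(\nu+1)}) - \bar\rho(j2^{-\nu})\big|\, \big|\xi_j^\theta\big|\, d\theta = 2^{-(\nu+1)} \sum_j \big|\bar\rho(j2^{-\nu}+2^{-(\nu+1)}) - \bar\rho(j2^{-\nu})\big|.
\end{equation*}
The remaining sum is a sum of increments of $\bar\rho$ over a partition of $\R$, hence bounded by $TV(\bar\rho)$. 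This gives $\|\gamma\|_{L^1} \le 2^{-(\nu+1)}\, TV(\bar\rho)$, which is the claimed bound. I would also briefly note that $\gamma$ is indeed a regular path in the sense of the preceding definition: all $\rho^\theta$ are piecewise constant with the same number of jumps for $\theta \in (0,1)$, they agree outside a fixed compact interval (since $\bar\rho_\nu$ and $\bar\rho_{\nu+1}$ are compactly supported modifications), and the tangent vector depends continuously (in fact constantly) on $\theta$ — the only failure of differentiability is at the endpoints $\theta=0,1$ where the number of jumps changes, exactly as flagged in the text before the lemma, and this is handled by the standard device of splitting $[0,1]$ into finitely many subintervals on each of which the path is regular.

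The main obstacle is essentially bookkeeping rather than a genuine difficulty: one must be careful about cells where $\bar\rho(j2^{-\nu}) = \bar\rho(j2^{-\nu}+2^{-(\nu+1)})$, in which there is no discontinuity at all and the cell contributes nothing (so the ``same number of jumps'' condition should be read after discarding such degenerate jumps, or one keeps a formal jump of zero size, which contributes zero to the length either way), and about the endpoints $\theta=0$ and $\theta=1$ where a jump is created or destroyed. As is standard in the generalized-tangent-vector formalism (see \cite{B,B2,B3}), the length of a path that is only piecewise regular is defined as the sum of the lengths of its regular pieces, and the creation/annihilation of a zero-strength jump at an endpoint contributes nothing; so the estimate passes through unchanged. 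No estimate on $\eta$ or on $v^\theta$ is needed here because both vanish identically at the initial time — the coupling with the bottleneck ODE only enters at later time steps, which is the subject of the subsequent sections.
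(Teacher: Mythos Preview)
Your argument is correct and is essentially the same as the paper's, which treats the lemma as an immediate consequence of the explicit path $\rho^\theta$ constructed just before it. Your computation $\xi_j^\theta = \frac{d}{d\theta}\bigl(j2^{-\nu}+2^{-(\nu+1)}(1+\theta)\bigr)=2^{-(\nu+1)}$ is exactly the content of the lemma.

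One small clarification on the reading of the statement: ``shifts whose norm is bounded by $2^{-(\nu+1)}$'' means precisely that each individual shift satisfies $|\xi_k^\theta|\le 2^{-(\nu+1)}$, not that the $L^1$-length of the path is bounded by $2^{-(\nu+1)}$. So no multiplicative constant $C$ is missing; the bound is literal, and you have verified it. (The fixed discontinuities at the points $j2^{-\nu}$ have $\xi=0$ and trivially satisfy the bound as well.) Your further step, bounding the full path length by $TV(\bar\rho)\cdot 2^{-(\nu+1)}$, is also correct and is exactly how the lemma is combined with the total variation later in the proof of the final convergence proposition, where $V_0$ is estimated by $TV(\bar\rho)$ times the initial shifts.
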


Since $\rho^\theta$ is piecewise constant, $v^\theta(x)$ is equal to 0 (where it is defined), then $\|v^\theta\|_{L^1}=0$. At time $t=0$, we also have $\eta^\theta=0$ since $y_\nu(0)=y_{\nu+1}(0)$. By (\ref{eq:ch2:length}), we have
$$
\|\gamma\|_{L^1}=\sum\limits_{k
=1}^{N}\int\limits_{0}^{1}\left\vert\Delta\rho^\theta_k\right\vert \left\vert \xi _{k
}^{\theta }\right\vert d\theta.
$$

Now we consider the evolution in time of $\gamma$, denoted by $\gamma_t$, in the sense that we perform the wave front tracking for $\bar\rho_\nu$ and $\bar\rho_{\nu+1}$. It is easy to prove that $\gamma_t$ is still regular (some more point of not differentiability will arise because of interactions order). We aim at proving that

\begin{equation}\label{eq:ch2:tanv-d1}
\left\Vert ((v,\xi ),\eta)_t\right\Vert \leq  e^{Kt}
\left\Vert ((v,\xi),\eta
)_0\right\Vert,
\end{equation}
for an appropriate constant $K>0$.
Then uniqueness and Lipschitz continuous dependence of solutions to Cauchy problems is straightforwardly achieved passing to the limit on the WFT-ODE
approximate solutions.

\subsection{Estimates on tangent vectors to $\rho_\nu$ due to WFT}
To estimate the shifts of waves at interactions times between waves,
we can use the estimate \cite[Lemma 2.7.2]{GPbook}
(originally proved in \cite{Bress93}).

\begin{lemma}\label{lemma4.1}
Consider two waves with speeds $\lambda_1$ and $\lambda_2$
respectively, that interact together producing a wave with
speed $\lambda_3$. If the first wave is shifted by (the tangent vector of the shift) $\xi_1$ and the
second wave by  (the tangent vector of the shift) $\xi_2$, then the (tangent vector of the) shift of the resulting wave at time of interaction is given by
\bee\label{eq51}
\xi_3 = \frac{\lambda_3 - \lambda_2}{\lambda_1 - \lambda_2} \xi_1 +  \frac{\lambda_1-\lambda_3}{\lambda_1 - \lambda_2}\xi_2.
\ede
Moreover we have
\bee\label{eq52}
\Delta \rho_3 \xi_3 = \Delta \rho_1 \xi_1 + \Delta \rho_2 \xi_2,
\ede
where $\Delta \rho_i$ are the signed strengths of the corresponding waves.
\end{lemma}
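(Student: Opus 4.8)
The plan is to verify the two relations by a direct computation using the Rankine--Hugoniot conditions for the three waves involved and the elementary identity that ties the resulting wave's strength and speed to those of the incoming waves. Throughout, I fix the interaction point and work with the values of $\rho$ immediately to the left, in the middle, and to the right of the configuration before interaction; call them $\rho_-,\rho_m,\rho_+$, so that wave $1$ connects $\rho_-$ to $\rho_m$, wave $2$ connects $\rho_m$ to $\rho_+$, and the outgoing wave $3$ connects $\rho_-$ to $\rho_+$. Then $\Delta\rho_1=\rho_m-\rho_-$, $\Delta\rho_2=\rho_+-\rho_m$, $\Delta\rho_3=\rho_+-\rho_-=\Delta\rho_1+\Delta\rho_2$, and the speeds $\lambda_i$ are the corresponding Rankine--Hugoniot quotients of the flux $g$ evaluated at the (locally frozen) space value.

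First I would establish \eqref{eq52}. The key point is a ``telescoping'' identity: the signed strength times the speed of any jump equals the jump of the flux, i.e.\ $\Delta\rho_i\,\lambda_i=\Delta g_i$ (the flux increment across wave $i$), and these flux increments are additive, $\Delta g_1+\Delta g_2=\Delta g_3$. However \eqref{eq52} is a statement about the shifts, not the speeds, so the correct route is the standard one from \cite{Bress93}: a shift $\xi_i$ of wave $i$ moves mass at rate $\Delta\rho_i\,\xi_i$ across that wave's location, and conservation of the total linearized mass displaced — which is exactly the content of the tangent vector living in the linearized/Finsler structure — forces the displaced mass of the outgoing wave to equal the sum of those of the incoming waves. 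I would phrase this as: the generalized tangent vector has an associated signed measure whose total mass is preserved through the interaction (since the underlying solution is a weak solution and the perturbation is itself a weak solution of the linearized equation), and evaluating that conserved mass just before and just after the interaction time gives $\Delta\rho_3\,\xi_3=\Delta\rho_1\,\xi_1+\Delta\rho_2\,\xi_2$.

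Next I would derive \eqref{eq51} from \eqref{eq52} together with a second, geometric relation coming from the positions of the waves at the interaction time. At the interaction instant $\bar t$ the three waves meet at one point, so the perturbed positions must remain consistent: writing the position of each shifted wave as its unperturbed position plus $h\,\xi_i+o(h)$ along the family parameter and using that waves $1$ and $2$ arrive at the meeting point along lines of slope $\lambda_1,\lambda_2$ while wave $3$ leaves along slope $\lambda_3$, matching the first-order displacement of the common point from both sides yields a linear relation among $\xi_1,\xi_2,\xi_3$ with coefficients built from $\lambda_1,\lambda_2,\lambda_3$. Solving this together with \eqref{eq52} — a $2\times2$ linear system in the unknown $\xi_3$ (or equivalently eliminating the shifted meeting point) — produces precisely the convex-type combination $\xi_3=\frac{\lambda_3-\lambda_2}{\lambda_1-\lambda_2}\xi_1+\frac{\lambda_1-\lambda_3}{\lambda_1-\lambda_2}\xi_2$, where the denominator $\lambda_1-\lambda_2\neq0$ because two distinct waves interact transversally.

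The main obstacle is not the algebra, which is routine, but making precise the bookkeeping of ``displacements of the common meeting point'': one must be careful that before the interaction the two incoming waves are at distinct locations $x_1(t),x_2(t)$ with $x_1(\bar t)=x_2(\bar t)$, and that the first-order-in-$h$ perturbation of the crossing time and crossing position is consistent when computed from wave $1$, from wave $2$, and from the outgoing wave $3$. Handling this cleanly — ideally by citing the already-available statement \cite[Lemma 2.7.2]{GPbook} and \cite{Bress93} as the paper suggests, and only sketching the two relations as above — is the cleanest way to close the proof. Since the statement is quoted verbatim from those references and the hypotheses of transversal interaction hold for the WFT approximations here (distinct wave speeds, which is guaranteed away from the bottleneck by strict hyperbolicity and near it by Lemma \ref{lemma3p1}), I would in fact present the proof as a short reduction to that cited lemma, remarking only that $g(\rho,x;y(t))$ is, at the (frozen) interaction point, an $x$-independent flux so the classical estimate applies verbatim.
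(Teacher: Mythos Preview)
Your proposal is correct in substance and in fact converges to the same endpoint as the paper: reducing to \cite[Lemma 2.7.2]{GPbook}. The paper's proof, however, is far terser than your outline. It does not rederive \eqref{eq51}--\eqref{eq52} from mass conservation and interaction geometry; it simply observes that the wave positions $x_i(t)$ satisfy the ODEs $\dot x_i = \lambda_i(\Delta\rho_i, x_i(t)-y(t))$, whose right-hand sides are smooth because $\varphi$ is $C^1$. Hence the $x_i$ are smooth functions of time, one may linearize around the interaction instant, and the cited lemma then applies directly. Your longer route --- establishing \eqref{eq52} by conservation of the linearized mass and then recovering \eqref{eq51} from the geometric matching of the shifted meeting point --- is exactly how one proves the cited lemma itself, so it is accurate but redundant here.

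One point of your final reduction deserves tightening. You justify applicability of the classical estimate by saying the flux is, ``at the (frozen) interaction point, an $x$-independent flux.'' That is not quite the operative observation: the waves are moving, so $x$ is not fixed, and the flux genuinely depends on $x$ through $\varphi(x-y)$ throughout the interaction. The correct bridge --- and the one the paper uses --- is the \emph{regularity} of $\varphi$: this makes $\lambda_i$ smooth in $x_i-y$, hence the wave trajectories are smooth, and linearizing at the interaction time reduces the picture to constant speeds $\lambda_1,\lambda_2,\lambda_3$, which is precisely the setting of \cite[Lemma 2.7.2]{GPbook}.
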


\begin{proof}
The interacting waves satisfy the ODE:
\begin{equation}\label{waves}
\frac{\partial x_i}{\partial t}(t)=\lambda_i(\Delta \rho_{i}, x_i(t)-y(t)) = \frac{\triangle[\rho_i\varphi(x_i(t)-y(t))(1-\rho_i)]}{\triangle\rho_i}, \qquad i=1,2 .
\end{equation}
From the regularity of $\varphi$ we have that $x_i$ are smooth functions
of time, thus we can linearize them around the interaction time
and apply Lemma 2.7.2 of \cite{GPbook}.
\end{proof}

The wave shifts change also when the waves are located within
the influence zone of the moving bottleneck, more precisely we have:
\begin{lemma}\label{lem:wavebuszone}
Consider a wave $(\rho_l,\rho_r)$ with shift $\xi$ which is in the influence zone
of the moving bottleneck on the time interval $[t_1,t_2]$
and does not interact with other waves of the bottleneck.
Then we have:
\begin{equation}\label{stimaode-waves-enun}
|\xi (t_2)| \le |\xi (t_1)| e^{\|\varphi'\|_{\infty} (t_2-t_1)}.
\end{equation}
\end{lemma}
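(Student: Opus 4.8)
The plan is to derive a differential inequality for $|\xi(t)|$ along the time interval $[t_1,t_2]$ during which the wave $(\rho_l,\rho_r)$ sits inside $(y(t)-\beta,\,y(t)+\beta)$ and meets no other bottleneck-generated wave. The wave location $x(t)$ solves the ODE \eqref{waves}, i.e.\ $\dot x(t)=\lambda(\Delta\rho, x(t)-y(t))$ with the Rankine--Hugoniot speed, and $\rho_l,\rho_r$ are constant along this arc (no interaction). The shift $\xi(t)$ is, by definition, the generalized tangent vector $\lim_{h\to0}(x^{\theta+h}(t)-x^\theta(t))/h$, so it satisfies the variational equation obtained by differentiating \eqref{waves} with respect to the path parameter $\theta$.

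First I would write $\lambda(\Delta\rho,\zeta)=\varphi(\zeta)\,\dfrac{\rho_r(1-\rho_r)-\rho_l(1-\rho_l)}{\rho_r-\rho_l}=\varphi(\zeta)\,(1-\rho_l-\rho_r)$ with $\zeta=x-y$, so that along the arc the only $\theta$-dependence of $\lambda$ enters through $\zeta$. Differentiating $\dot x=\lambda(\Delta\rho,x-y)$ in $\theta$ gives
\begin{equation*}
\dot\xi(t)=\partial_\zeta\lambda\cdot\big(\xi(t)-\eta(t)\big)=\varphi'(x(t)-y(t))\,(1-\rho_l-\rho_r)\,\big(\xi(t)-\eta(t)\big),
\end{equation*}
where $\eta(t)$ is the shift of the bottleneck trajectory. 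The key point is that, by Remark 3, the $s_b$ waves — and in particular the wave under consideration, once it has been absorbed into the bottleneck's influence zone — are shifted rigidly together with the bottleneck, so $\eta(t)=\xi(t)$ here (the wave and the bottleneck are displaced by the same infinitesimal amount when the whole configuration is shifted). Hence the forcing term $\xi-\eta$ is \emph{not} what drives the growth; rather, after accounting for the relative displacement correctly, the variational equation reduces to $\dot\xi=\varphi'(x-y)(1-\rho_l-\rho_r)\,\xi + (\text{lower order})$, and since $|1-\rho_l-\rho_r|\le1$ and $|\varphi'|\le\|\varphi'\|_\infty$ one gets $|\dot\xi(t)|\le\|\varphi'\|_\infty\,|\xi(t)|$. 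Grönwall's inequality on $[t_1,t_2]$ then yields \eqref{stimaode-waves-enun}.

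The main obstacle I expect is the bookkeeping of \emph{which} quantity plays the role of the forcing in the variational equation: one must argue carefully, using Remark 3 and the structure of Step 4, that the displacement of the bottleneck and of the co-moving $s_b$ wave are synchronized, so that the coefficient multiplying $\xi$ in the differential inequality is exactly $\varphi'(x-y)(1-\rho_l-\rho_r)$, bounded by $\|\varphi'\|_\infty$, with no extra contribution. A secondary technical point is the smoothness needed to linearize \eqref{waves} in $\theta$ on the arc $[t_1,t_2]$: this follows, exactly as in the proof of Lemma 7, from $\varphi\in C^1$ and the fact that $\rho_l,\rho_r$ stay constant (no interaction on the arc), so $x(t)$ is a $C^1$ function of both $t$ and $\theta$ and the generalized tangent vector $\xi(t)$ genuinely solves the stated ODE. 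Once these are in place the Grönwall step is immediate.
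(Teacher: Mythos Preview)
Your proposal contains a genuine gap. The appeal to Remark~\ref{rem:shftsb} to conclude $\eta(t)=\xi(t)$ is a misreading: that remark concerns only the newly created $s_b$ waves produced in Step~4 of the scheme, which inherit the bottleneck's shift $\eta$ at their moment of creation. The wave $(\rho_l,\rho_r)$ in the present lemma is an \emph{arbitrary} wave that happens to lie in the influence zone and carries its own shift $\xi$, in general distinct from $\eta$. Worse, even if one granted $\eta=\xi$, your own variational equation $\dot\xi=\partial_\zeta\lambda\,(\xi-\eta)$ would then force $\dot\xi\equiv 0$, contradicting the next sentence where you assert it ``reduces to $\dot\xi=\varphi'(x-y)(1-\rho_l-\rho_r)\,\xi$''. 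The phrase ``after accounting for the relative displacement correctly'' does not bridge this inconsistency; the argument as written does not go through.

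The paper's proof is far more direct and sidesteps the $\eta$ bookkeeping entirely. One regards the bottleneck trajectory $y(\cdot)$ as fixed and views the wave position as the solution $x(t;z)$ of the ODE $\dot x=\lambda(\rho_l,\rho_r,x-y(t))$ with initial condition $x(t_1)=z$. Since $\lambda(\rho_l,\rho_r,\zeta)=\varphi(\zeta)(1-\rho_l-\rho_r)$ and $|1-\rho_l-\rho_r|\le 1$, the right-hand side is Lipschitz in $x$ with constant $\|\varphi'\|_\infty$. The classical continuous-dependence estimate then gives $|x(t;\bar x+\epsilon\,\xi(t_1))-x(t;\bar x)|\le \epsilon\,|\xi(t_1)|\,e^{\|\varphi'\|_\infty(t-t_1)}$ uniformly on $[t_1,t_2]$; dividing by $\epsilon$ and letting $\epsilon\to 0$ yields \eqref{stimaode-waves-enun}. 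No linearization in $\theta$ and no reference to Remark~\ref{rem:shftsb} are required.
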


\begin{proof}
Let $x(t;z)$, $t \in [t_1, t_2]$ be the unique solution to the ODE:
 \begin{equation}\label{ode-waves}
\left\{
\begin{array}{l}
\frac{\partial x}{\partial t} =\lambda(\rho_l, \rho_r, \zeta)\\
x(t_0)= z.
\end{array}
\right.
\end{equation}
Let $\bar{x}$ be the position of the wave at time $t_1$ and
consider the solution to shifted initial position
$x(t;\bar{x}+\xi)$, $t \in [t_1, t_2]$.
Note that, since $\lambda= \frac{\varphi(\zeta)\Delta (\rho(1-\rho))}{\Delta \rho} $, the solution $x(t;\cdot)$ satisfies
\begin{equation}\label{stimaode-waves}
|x(t;\bar{x}+\xi)-x(t,\bar{x})| \le |\xi | e^{C(t - t_1)},
\end{equation}
uniformly in $t \in [t_1, t_2]$, with $C = \|\varphi'\|_{\infty} = \max_{\mathbb{R}}|\varphi'|$
the uniform Lipschitz constant for $\lambda$,
 and then we also obtain the estimate
\begin{equation}\label{stimaode2-waves}
|\xi(t)|= \lim_{\epsilon \to 0}\left|\frac{x(t;\bar{x}+\epsilon \xi(t_1))-x(t;\bar{x})}{\epsilon}\right| \le e^{C(t-t_1)}\, |\xi(t_1)|, \quad t \in [t_1, t_2].
\end{equation}
\end{proof}

\subsection{Estimates on tangent vectors to $\rho_\nu$ due to the source}
As specified above, our WFT algorithm includes a sampling procedure on the interval $[\bar y-\beta,\bar y+\beta]$ where $\rho^{source}$ is not piecewise constant due to the action of the source term; see \eqref{updatedsourceterm}. The so-defined  piecewise constant  approximation $\rho^{updated}$ has new  tangent vectors which are estimated in the following lemma.
\begin{lemma}\label{lem:source}
Let $(\rho_l,\rho_r)$ be a discontinuity of $\rho^{source}$ with shift $\xi^-$ located inside the region of influence of the bottleneck, and let us denote with $|\Delta \rho^-|$ its strength: $|\Delta \rho^-| = | \rho_l - \rho_r|$. Then in $\rho^{updated}$ this discontinuity has  unmodified shift and a new strength  $|\Delta \rho^+|$ which verifies the following estimate
\begin{equation}\label{eq:sourcupdate}
|\Delta \rho^+| \leq |\Delta \rho^-|
(1+ \|\varphi'\|_{\infty}\, \Delta t ).
\end{equation}
As a consequence, the tangent vector is estimated as follows:
\begin{equation}\label{eq:sourcupdate2}
|\xi^+\ \Delta \rho^+|  \leq
 |\xi^- \Delta \rho^- | (1+ \|\varphi'\|_{\infty} \,  \Delta t),
\end{equation}
where $\xi^+ = \xi^-$ is the  (unmodified) shift of the wave in $\rho^{updated}$.
\end{lemma}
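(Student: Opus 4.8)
The plan is to track how the discontinuity $(\rho_l,\rho_r)$ changes when we pass from $\rho^{source}$ to $\rho^{updated}$. By the sampling rules in Step 4, a discontinuity coming from a WFT step (an $s_b$--wave or a shock/rarefaction-shock) is always sampled \emph{to its left and to its right}, so in $\rho^{updated}$ it still appears as a single jump located at the same point: hence the shift is genuinely unmodified, $\xi^+=\xi^-$. What does change is the strength of the jump, because $\rho^{source}$ was obtained from the piecewise constant $\rho_\nu(\Delta t,\cdot)$ by adding the source contribution $\omega(x;\rho_\nu,\Delta t)=-\Delta t\,\partial_x g(\rho_\nu(\Delta t,x),x)=\Delta t\,\rho(1-\rho)\,\varphi'(x-\bar y)$. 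First I would write, for the sampled-left value $a=x_i$ and sampled-right value $b=x_{i+1}$ bracketing the discontinuity,
\begin{equation*}
|\Delta\rho^+| = \big|\rho^{source}(b-)-\rho^{source}(a+)\big|
\le |\Delta\rho^-| + \big|\omega(b;\rho_\nu,\Delta t)-\omega(a;\rho_\nu,\Delta t)\big|.
\end{equation*}

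The second step is to estimate the increment $|\omega(b)-\omega(a)|$. On the relevant subinterval the value $\rho_\nu(\Delta t,\cdot)$ used to evaluate $\omega$ is constant and equal to the one-sided trace producing the jump, so $\omega(\cdot;\rho_\nu,\Delta t)$ restricted there is $\Delta t\,\rho(1-\rho)\,\varphi'(\cdot-\bar y)$ with $\rho$ fixed. Using $\rho(1-\rho)\le 1/4$ and the fundamental theorem of calculus,
\begin{equation*}
\big|\omega(b)-\omega(a)\big| \le \Delta t\,\rho(1-\rho)\int_a^b|\varphi''(\zeta-\bar y)|\,d\zeta,
\end{equation*}
but to get the cleaner bound stated in \eqref{eq:sourcupdate} I would instead compare $\omega$ at the two traces directly: $|\omega(b)-\omega(a)|\le \Delta t\,\|\varphi'\|_\infty\,|\rho_l(1-\rho_l)-\rho_r(1-\rho_r)| = \Delta t\,\|\varphi'\|_\infty\,|\Delta\rho^-|\,|1-\rho_l-\rho_r| \le \Delta t\,\|\varphi'\|_\infty\,|\Delta\rho^-|$, where in the last step one uses $\rho_l,\rho_r\in[0,1]$ so that $|1-\rho_l-\rho_r|\le 1$. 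Combining with the previous display gives $|\Delta\rho^+|\le|\Delta\rho^-|(1+\|\varphi'\|_\infty\,\Delta t)$, which is \eqref{eq:sourcupdate}.

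Finally, \eqref{eq:sourcupdate2} is immediate: since $\xi^+=\xi^-$,
\begin{equation*}
|\xi^+\,\Delta\rho^+| = |\xi^-|\,|\Delta\rho^+| \le |\xi^-|\,|\Delta\rho^-|(1+\|\varphi'\|_\infty\,\Delta t) = |\xi^-\,\Delta\rho^-|(1+\|\varphi'\|_\infty\,\Delta t).
\end{equation*}
The main obstacle, and the point that needs the most care, is the bookkeeping in the first step: one must verify from the precise sampling rules of Step 4 that \emph{every} WFT-generated discontinuity is indeed bracketed by sampling points on both sides (so the jump survives as a single jump with an unchanged location), and that the one-sided traces of $\rho^{source}$ at those sampling points are exactly the constant states adjacent to the original discontinuity — otherwise the decomposition $\rho^{source}=\rho_\nu(\Delta t,\cdot)+\omega$ with $\rho_\nu(\Delta t,\cdot)$ piecewise constant could not be used termwise. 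Once that structural fact is in hand, the estimates are the routine one-line computations above.
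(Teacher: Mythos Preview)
Your proposal is correct and follows essentially the same route as the paper: invoke Step~4 to see the shift is unchanged, then bound the new jump by the mean-value estimate $|\rho_l(1-\rho_l)-\rho_r(1-\rho_r)|=|\Delta\rho^-|\,|1-\rho_l-\rho_r|\le|\Delta\rho^-|$ combined with $|\varphi'|\le\|\varphi'\|_\infty$. One clarification resolves the ``bookkeeping obstacle'' you flag: in the paper's notation the discontinuity is \emph{at} the point $x_i$ itself (not between $x_i$ and $x_{i+1}$), and Step~4 samples $\rho^{source}(x_i-)$ on the left and $\rho^{source}(x_i+)$ on the right, so the new jump is exactly $(\rho_r-\Delta t\,\varphi'(x_i-\bar y)\rho_r(1-\rho_r))-(\rho_l-\Delta t\,\varphi'(x_i-\bar y)\rho_l(1-\rho_l))$ with $\varphi'$ evaluated at the \emph{same} point---this makes your integral detour unnecessary and your ``compare at the two traces'' computation the whole proof.
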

\begin{proof}
Due to Step 4 in our WFT algorithm (cfr.\ Section  \ref{sec:WFT-ODE}),  a wave $(\rho_l,\rho_r)$ of  $\rho^{source}$ located at a point in the region of influence of the bottleneck will be located in $\rho^{updated}$ at the same point, that is, $\xi^- = \xi^+$, indicating by ${}^\pm$ the values  before and after the source action and the sampling.
However, the jump will be changed by this procedure, namely
$(\rho_l-\Delta t\ \partial_x\varphi\ \rho_l(1-\rho_l),
\rho_r-\Delta t\ \partial_x\varphi\ \rho_r(1-\rho_r) )$. Therefore
\begin{equation*}
 | \Delta\rho^+|\leq
| \Delta \rho^- | \ (1+\Delta t \|\varphi'\|_{\infty}
\|\partial_\rho (\rho(1-\rho))\|_{\infty})
\leq |\Delta \rho^- | \ (1+ \|\varphi'\|_{\infty} \,  \Delta t)
\end{equation*}
and in addition
\begin{equation*}
|\xi^+\ \Delta \rho^+| =| \xi^- \ \Delta\rho^+|\leq
|\xi^- \Delta \rho^- | \ (1+\Delta t \|\varphi'\|_{\infty}
\|\partial_\rho (\rho(1-\rho))\|_{\infty})
\leq |\xi^- \Delta \rho^- | \ (1+ \\|\varphi'\|_{\infty}\,  \Delta t).
\end{equation*}
\end{proof}

\subsection{Estimates on tangent vectors to $y_\nu$}
The tangent vector to $y_\nu$, i.e. $\eta$, changes only
at interaction times of the moving bottleneck with waves.
Both the moving bottleneck and the waves satisfy ODEs
with smooth right-hand-side, thus by linearization we find
the same interaction estimate as for the case treated in \cite{BP08}.

\begin{proposition}\label{stimaetapiu}
Let $t^*$ be an interaction time between the moving bottleneck and
a wave $(\rm,\rp)$ and indicate by $\eta_\pm$ the moving bottleneck
shift after and before the interaction respectively. Then we have:
\begin{align}
&|\eta_+| \le |\eta_-| +\frac{ w_{max}}{\mu}\, |\xi_-|\, |\rp-\rm|, & &\hbox{if the wave is a shock};
\label{stima_eta+s}\\
&|\eta_+| \le |\eta_-| \left(1 + \frac{w_{max}\delta_\nu}{\mu}\right)+
\frac{ w_{max}}{\mu}\, |\xi_-|\, |\rp^\theta-\rm^\theta|, & &\hbox{if the wave is a rarefaction.}
\label{stima_eta+}
\end{align}
where $\mu$ is given by Lemma \ref{lemma3p1}.
\end{proposition}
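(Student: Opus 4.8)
The plan is to linearize both ODEs---the one for the bottleneck position $y$ and the one for the interacting wave position $x$---around the interaction time $t^*$, and to track how the infinitesimal shifts $\eta$ and $\xi$ propagate through the interaction. Before $t^*$ the bottleneck satisfies $\dot y = w(\rho(t,y+))$ with $\rho(t,y+)$ equal to the state on the far side of the wave, say $\rho_\ell$ if the bottleneck approaches from the left; after $t^*$ it satisfies $\dot y = w(\rho_r)$ (or the appropriate updated state). The interaction time $t^*=t^*(\theta)$ itself depends on $\theta$, since shifting the wave by $\xi_-$ and the bottleneck by $\eta_-$ changes when the two trajectories meet. The first step is therefore to compute $\frac{d t^*}{d\theta}$: from $y(t^*)=x(t^*)$ one gets, to first order, $\eta_- + \dot y\,\delta t^* = \xi_- + \dot x\,\delta t^*$, hence $\delta t^* = \frac{\eta_- - \xi_-}{\dot x - \dot y} = \frac{\eta_- - \xi_-}{\lambda - w(\rho_\ell)}$. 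By Lemma \ref{lemma3p1}, the denominator is bounded away from zero: $w(\rho_\ell) - \lambda > \mu$, so $|\delta t^*| \le \frac{|\eta_-| + |\xi_-|}{\mu}$.

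The second step is to push this through to the post-interaction shift of $y$. Immediately after $t^*$, the new bottleneck position is $y(t^*) + \eta_-$ shifted further by the change in the meeting point, and then evolves with the new velocity $w(\rho_r)$; the standard interaction computation gives $\eta_+ = \eta_- + \bigl(w(\rho_\ell) - w(\rho_r)\bigr)\,\delta t^*$ in the shock case. Since $w(\rho) = w_{max}(1-\rho)$, one has $|w(\rho_\ell) - w(\rho_r)| = w_{max}|\rho_\ell - \rho_r| = w_{max}|\rp - \rm|$, and combining with the bound on $\delta t^*$ yields $|\eta_+| \le |\eta_-| + \frac{w_{max}}{\mu}\,|\rp-\rm|\,(|\eta_-| + |\xi_-|)$. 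A small additional argument is needed to drop the $|\eta_-|$ inside the parenthesis and land exactly on \eqref{stima_eta+s}: one observes that the portion of the shift already carried by $\eta_-$ moving with the wave is accounted separately, so only the genuine ``new'' contribution $\frac{w_{max}}{\mu}|\xi_-||\rp-\rm|$ appears---this is precisely the bookkeeping used in \cite{BP08}. For the rarefaction-shock case, the wave is a non-entropic shock of strength $\delta_\nu$ with $\rm > \rp$, and the relevant bound in Lemma \ref{lemma3p1} is $w(\rm) > \lambda + \mu$; moreover the $\theta$-dependence of the wave strengths $\rp^\theta, \rm^\theta$ must be retained. Carrying the same linearization through, but now also allowing the $\delta_\nu$-sized jump in the bottleneck velocity to feed back into $\eta$, produces the extra multiplicative factor $\bigl(1 + \frac{w_{max}\delta_\nu}{\mu}\bigr)$ in \eqref{stima_eta+}.

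The main obstacle I anticipate is the careful accounting in the shock case that separates the ``transported'' part of $\eta_-$ from the ``newly created'' shift, so as to obtain the clean additive bound \eqref{stima_eta+s} rather than a multiplicative one of the form $|\eta_+| \le |\eta_-|(1 + \frac{w_{max}}{\mu}|\rp-\rm|) + \frac{w_{max}}{\mu}|\xi_-||\rp-\rm|$. This requires choosing the right reference frame for measuring the shift of $y$ after the interaction---namely, comparing to the trajectory that has already been displaced by $\eta_-$---and using that $w$ is affine so that the velocity increment is exactly proportional to the jump $|\rp-\rm|$. Once that is set up, the rest is the routine linearization of smooth ODEs around $t^*$, justified by the $C^1$ regularity of $\varphi$ (so that both $\lambda$ and $w(\rho(\cdot,y(\cdot)+))$ are locally Lipschitz in the relevant variables, exactly as in Lemma \ref{lemma4.1} and Lemma \ref{lem:wavebuszone}).
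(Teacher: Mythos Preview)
Your approach---linearize the two ODEs around the interaction time $t^*$, compute the variation $\delta t^*$ of the meeting time, and propagate it to $\eta_+$---is exactly what the paper intends (it only says ``by linearization we find the same interaction estimate as for the case treated in \cite{BP08}'' and states the proposition without further proof). Your formulas $\delta t^* = (\xi_- - \eta_-)/(w(\rho_\ell)-\lambda)$ and $\eta_+ = \eta_- + (w(\rho_\ell)-w(\rho_r))\,\delta t^*$ are correct.

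The ``obstacle'' you anticipate in the shock case is not real, and no special bookkeeping or reference-frame argument is needed. Just substitute and collect:
\[
\eta_+ \;=\; \eta_-\,\frac{w(\rho_r)-\lambda}{w(\rho_\ell)-\lambda}
\;+\;\xi_-\,\frac{w(\rho_\ell)-w(\rho_r)}{w(\rho_\ell)-\lambda}.
\]
For a shock one has $\rho_r>\rho_\ell$, hence $w(\rho_r)<w(\rho_\ell)$; since Lemma~\ref{lemma3p1} gives $w(\rho_r)-\lambda>\mu>0$ and $w(\rho_\ell)-\lambda>\mu>0$, the coefficient of $\eta_-$ lies in $(0,1)$, which immediately yields \eqref{stima_eta+s}. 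For a rarefaction shock $\rho_\ell>\rho_r$, so the same coefficient exceeds $1$ and equals $1+\dfrac{w(\rho_r)-w(\rho_\ell)}{w(\rho_\ell)-\lambda}\le 1+\dfrac{w_{max}\delta_\nu}{\mu}$, giving \eqref{stima_eta+}. So the distinction between the additive bound (shock) and the multiplicative factor (rarefaction) comes purely from the sign of $w(\rho_\ell)-w(\rho_r)$, not from any subtler accounting.
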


\subsection{Convergence of WFT-ODE}
To prove convergence of the WFT-ODE scheme, we estimate the increase of the tangent vectors $\eta$
to the moving bottleneck position $y_\nu$. We fix $\Delta t$ and $\nu$ and start introducing some notation.\\
\begin{definition}
We denote by $(TV)_j^{p}$ the total variation of waves of $\rho_\nu(j\Delta t)$ which are to the right of the moving
bottleneck (thus can potentially interact with it), and by $(TV)_j^{i}$ the total variation of waves
of $\rho_\nu$ which interact with the moving bottleneck in the time interval $[j\Delta t, (j+1) \Delta t]$.\\
Similarly we denote by $V_j^{p}$ the sum of the norms of tangent vectors to waves of $\rho_\nu(j\Delta t)$
which are to the right of the moving bottleneck (thus can potentially interact with it),
and by $V_j^{i}$ the norms of tangent vectors to waves of
$\rho_\nu$ which interact with the moving bottleneck in the time interval $[j\Delta t, (j+1) \Delta t]$.\\
We denote by $\eta_j$ the tangent vector to $y_\nu$ at time $j\Delta t$.
\end{definition}

With these notations we have the following estimates. From Lemma \ref{le:step3-tv-est}, we get:
\begin{equation}\label{eq:est:TVj}
(TV)_{j+1}^p\leq (TV)_{j}^p - (TV)_{j}^{i} + K_\varphi \Delta t.
\end{equation}
From Lemmas \ref{le:step3-tv-est}, \ref{lemma4.1}, \ref{lem:wavebuszone} and \ref{lem:source},
and Remark \ref{rem:shftsb},  we get:
\begin{equation}\label{eq:est:Vj}
V_{j+1}^p\leq e^{\|\varphi'\|_{\infty} \Delta t} (1+\|\varphi'\|_{\infty} \Delta t)\ V_j^p +
K_\varphi \Delta t\, \eta_{j+1} - V_j^i.
\end{equation}
More precisely, Lemma \ref{le:step3-tv-est} guarantees that the total variation of new waves due to Step 3
is bounded by $K_\varphi \Delta t$, thus the new contribution to the tangent vector norm
is bounded by $K_\varphi \Delta t\, \eta_{j+1}$ by Remark \ref{rem:shftsb};
Lemma \ref{lemma4.1} guarantees that the norm of tangent vector does not increase for
waves interactions; Lemma \ref{lem:wavebuszone} shows that the magnification due to
permanence in the moving bottleneck influence zone is bounded by $e^{\|\varphi'\|_{\infty} \Delta t}$;
Lemma \ref{lem:source} provides the bound $(1+\|\varphi'\|_{\infty} \Delta t)$ on magnification due to the source term.\\
Finally, from Proposition \ref{stimaetapiu}, $\eta$ may increase
by interacting with a wave $(\rm,\rp)$ having shift $\xi$
with multiplicative factor at most
$(1+ \frac{w_{max}}{\mu} |\rm-\rp|)$ for any
rarefaction wave and additive factor
$ \frac{w_{max}}{\mu} |\xi|\ |\rm-\rp|$ for any wave.
The worst case scenario happens when first all shocks interact and then
all rarefactions, this is bounded by the estimate:
\begin{equation}\label{eq:est:etaj}
\eta_{j+1}\leq e^{\frac{w_{max}}{\mu} (TV)_j^i}\
\left(\eta_j + \frac{w_{max}}{\mu}\, V_j^i\right)
\end{equation}
We can rewrite the estimates (\ref{eq:est:TVj})--(\ref{eq:est:etaj}) as:
\begin{equation}\label{eq:est:K}
\begin{aligned}
(TV)_{j+1}^p & \leq (TV)_{j}^p - (TV)_{j}^{i} + K \Delta t,\\
V_{j+1}^p & \leq (1+K \Delta t)\ V_j^p +  K \Delta t\, \eta_{j+1} - V_j^i + o(\Delta t),\\
\eta_{j+1} & \leq  e^{K (TV)_j^i}\, (\eta_j + K\, V_j^i),
\end{aligned}
\end{equation}
where $K=\max \{K_\varphi, 2\|\varphi'\|_{\infty} , \frac{w_{max}}{\mu}  \}$.
We now have the following:
\begin{lemma}
Consider sequences $(TV)_j^p$, $(TV)_j^i$, $V_j^p$, $V_j^i$ and $\eta_j$ satisfying (\ref{eq:est:K}).
Let $\mu_j$ be the solution to the system:
\begin{equation}\label{eq:est:K'}
\begin{aligned}
V_{j+1} & =   K \Delta t\, \mu_{j+1} + o(\Delta t), \\
\mu_{j+1} & = \mu_j + K\, e^{KT}\, V_j,
\end{aligned}
\end{equation}
with initial datum $\mu_1= K e^{KT}\, V_0$ and $V_1=K\Delta t\, \mu_1$.
Then we have $\eta_j\leq e^{K\, TV(\bar{\rho})+KT} \mu_j$.
\end{lemma}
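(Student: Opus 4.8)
The plan is to compare the given nonlinear recursion (\ref{eq:est:K}) with the simplified linear system (\ref{eq:est:K'}) term by term, using an induction on $j$ together with the telescoping structure of the first line of (\ref{eq:est:K}). First I would establish the two ``global'' facts that drive everything. Fact one: summing the first inequality in (\ref{eq:est:K}) from $0$ to $j-1$ and using $(TV)_0^p \le TV(\bar\rho)$, we get $\sum_{\ell=0}^{j-1}(TV)_\ell^i \le (TV)_0^p + K T \le TV(\bar\rho)+KT$ (since $j\Delta t\le T$), so that $e^{K(TV)_\ell^i}$ contributes, cumulatively, at most a factor $e^{K\,TV(\bar\rho)+KT}$. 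Fact two: since $(TV)_{j+1}^p\le (TV)_j^p+K\Delta t$, one has $(TV)_j^p\le TV(\bar\rho)+KT$ for all $j\le T/\Delta t$, and similarly $V_j^p$ stays bounded by a constant depending only on $K,T,V_0$ via the second line; this lets me replace the $e^{K(TV)_j^i}$ prefactor in the third line of (\ref{eq:est:K}) by $e^{KT}$ inside the majorizing system (\ref{eq:est:K'}) after ``pulling out'' the cumulative exponential as $e^{K\,TV(\bar\rho)+KT}$.

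The core is the induction. The inductive hypothesis is $\eta_j\le e^{K\,TV(\bar\rho)+KT}\,\mu_j$ together with an auxiliary bound $V_j^p\le e^{K\,TV(\bar\rho)+KT}\,V_j$ (which I expect to carry along in parallel, since $\eta$ and $V^p$ are coupled). For the base case, $\mu_1=Ke^{KT}V_0$ and $V_1=K\Delta t\,\mu_1$ are designed precisely so that $\eta_1$, computed from $\eta_0$ and $V_0^i\le V_0^p$ via the third line of (\ref{eq:est:K}), is dominated by $e^{K\,TV(\bar\rho)+KT}\mu_1$; here one uses $e^{K(TV)_0^i}\le e^{KT}$ and absorbs the $o(\Delta t)$. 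For the inductive step, I feed the hypothesis into the third line of (\ref{eq:est:K}): $\eta_{j+1}\le e^{K(TV)_j^i}(\eta_j+KV_j^i)$. I bound $V_j^i\le V_j^p\le e^{K\,TV(\bar\rho)+KT}V_j$, bound $\eta_j\le e^{K\,TV(\bar\rho)+KT}\mu_j$, and — this is the delicate point — distribute the factor $e^{K(TV)_j^i}$: using Fact one, the product $\prod_{\ell\le j} e^{K(TV)_\ell^i}\le e^{K\,TV(\bar\rho)+KT}$, so the single factor $e^{K(TV)_j^i}$ can be charged against this budget rather than against the already-extracted constant. One then reads off $\eta_{j+1}\le e^{K\,TV(\bar\rho)+KT}(\mu_j + K e^{KT}V_j) + \text{(error)} = e^{K\,TV(\bar\rho)+KT}\mu_{j+1}$ using the definition of $\mu_{j+1}$ in (\ref{eq:est:K'}); the $o(\Delta t)$ terms are exactly the ones retained in the first line of (\ref{eq:est:K'}), which is why that system is stated with an $o(\Delta t)$ rather than cleanly. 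Finally I close the parallel bound on $V_{j+1}^p$: from the second line of (\ref{eq:est:K}), $V_{j+1}^p\le(1+K\Delta t)V_j^p+K\Delta t\,\eta_{j+1}+o(\Delta t)$, and substituting the bounds on $V_j^p$ and $\eta_{j+1}$ just obtained, together with the first line of (\ref{eq:est:K'}), gives $V_{j+1}^p\le e^{K\,TV(\bar\rho)+KT}V_{j+1}$, completing the induction.

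The main obstacle is bookkeeping the two intertwined exponential factors: the ``once and for all'' constant $e^{K\,TV(\bar\rho)+KT}$ that multiplies the whole conclusion, versus the step-by-step factors $e^{K(TV)_j^i}$ whose product must be controlled by the \emph{same} budget $TV(\bar\rho)+KT$ without double-counting. The resolution is that the telescoping inequality for $(TV)^p_j$ gives a single global bound $\sum_j (TV)_j^i\le TV(\bar\rho)+KT$, so one introduces the constant $e^{K\,TV(\bar\rho)+KT}$ at the outset as an \emph{upper bound for the entire product of interaction factors over all steps}, and then each individual $e^{K(TV)_j^i}$ is simply ``already included'' — one never multiplies by it again, one only verifies the partial products stay under the cap. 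A secondary, purely technical point is tracking the $o(\Delta t)$ terms through the recursion and checking they sum, over the $T/\Delta t$ steps, to something that vanishes as $\Delta t\to0$ (equivalently $\nu\to\infty$); this is routine given that each step contributes $o(\Delta t)$ uniformly, but it must be stated so that the subsequent passage to the limit is legitimate.
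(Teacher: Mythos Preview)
Your induction does not close as written, and the gap is precisely at the point you flag as ``delicate.'' With the fixed constant $C=e^{K\,TV(\bar\rho)+KT}$ in the inductive hypothesis, the step for $\eta$ reads
\[
\eta_{j+1}\le e^{K(TV)_j^i}\big(C\mu_j+K\,C\,V_j\big),
\]
and you need this to be $\le C\mu_{j+1}=C(\mu_j+Ke^{KT}V_j)$. That requires
\[
e^{K(TV)_j^i}(\mu_j+KV_j)\le \mu_j+Ke^{KT}V_j,
\]
which is \emph{false} whenever $(TV)_j^i$ is not small and $\mu_j$ dominates $V_j$ --- and in the system (\ref{eq:est:K'}) one has $V_j\approx K\Delta t\,\mu_j$, so $\mu_j$ always dominates. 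The phrase ``already included in the budget'' cannot be made literal here: once the full constant $C$ sits in the hypothesis, each new factor $e^{K(TV)_j^i}$ genuinely multiplies it, and you have double-counted. A running product $P_j=\prod_{\ell<j}e^{K(TV)_\ell^i}$ in place of $C$ would avoid this, but then the parallel hypothesis $V_j^p\le P_jV_j$ fails for a different reason: the $V^p$ recursion has no such exponential factor to match. The same kind of mismatch breaks the $V^p$ step of your induction when $V_j^i\ll V_j^p$: dropping $-V_j^i$ leaves $(1+K\Delta t)V_j^p$ on the right, which is of order $CK\Delta t\,\mu_j$, not $o(\Delta t)$, so you cannot recover $V_{j+1}^p\le CV_{j+1}$.

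The paper does not attempt an induction at all. It argues by worst-case analysis: since $\eta_j$ is monotone increasing, the product $\prod_j e^{K(TV)_j^i}$ (subject to $\sum_j(TV)_j^i\le TV(\bar\rho)+KT$) hurts most when concentrated entirely at the final step, which is exactly the extracted factor $e^{K\,TV(\bar\rho)+KT}$. Then, with that factor removed, the additive contribution $KV_j^i$ is maximized by $V_j^i=V_j^p$; the paper notes this choice might under-grow $V^p$ via the lost $(1+K\Delta t)$ amplification, but that cumulative amplification is at most $e^{KT}$ over $[0,T]$, which is precisely the extra factor inserted in the second line of (\ref{eq:est:K'}). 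The simplified system (\ref{eq:est:K'}) is thus \emph{designed} to dominate the extremal configuration, not to dominate step-by-step. Your two global facts (the telescoping bound on $\sum(TV)_j^i$ and the $e^{KT}$ bound on $\prod(1+K\Delta t)$) are exactly the ingredients the paper uses, but they feed a maximization argument, not an inductive comparison.
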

\begin{proof}
First we have $\sum_i (TV)_j^i\leq TV(\bar{\rho})+KT$.
Notice that $\eta_j$ is always increasing so the worst case
for the multiplicative terms $e^{K (TV)_j^i}$ in \eqref{eq:est:K}
(third equation)
is when the $(TV)_j^i=0$ for $j=1,\ldots, \frac{T}{\Delta t}-1$
and $(TV)_{\frac{T}{\Delta t}}^i=TV(\bar{\rho})+KT$.
Also notice that the increase of $\eta_{j+1}$ due to the $V_j^i$ term
is maximized when $V_j^i=V_j^p$. On the other side
this may not achieve the maximal increase in $V^p_{j+1}$
because of the multiplicative term $(1+K\Delta t)$.
However, such maximal increase of $V^p_{j+1}$  on the interval $[0,T]$ (thus after all time steps) is bounded by $e^{KT}$,
which is the term appearing on the right-hand side of (\ref{eq:est:K'})
(second equation).
Therefore, with this corrected multiplicative term, the increase is maximized.
Finally, $\eta_0=0$ thus the initial data for \eqref{eq:est:K'}
are given by $\mu_1= K e^{KT}\, V_0$ and $V_1=K\Delta t\, \mu_1$.
\end{proof}

We are now ready to estimate $\eta_j$. First notice that we have:
\[
\mu_{j+1}\leq \mu_j + K\, e^{KT}\, V_j \leq
(1+K^2 e^{KT} \Delta t)\ \mu_j
\]
thus:
\[
\mu_{j}\leq e^{K^2T\,  e^{KT}} (K e^{KT}\, V_0)
\]
and finally:
\begin{equation}\label{eq:est:etafinal}
\eta_j\leq e^{K\, TV(\bar{\rho})+KT} \mu_j
\leq  e^{K\, TV(\bar{\rho})+KT} e^{K^2T\, e^{KT}} (K e^{KT}\, V_0)
\end{equation}

Thus we obtain the following:
\begin{proposition}
Consider initial conditions  $\bar\rho \in \mathcal{BV}(\R)$, $y(0)\in\R$ and time horizon $T>0$.
Using the notation of Lemma \ref{le:T-est}, assume that $\eta<1$ and $T$ satisfy (\ref{eq:cond:T}).
Let $(\rho_\nu,y_\nu)$ be the approximate solutions
computed by the WFT-ODE scheme, then:
\begin{equation}
|y_{\nu+1}(t)-y_\nu(t)| \le  K_1\   2^{-(\nu+1)}.
\end{equation}
where $K_1$ depends only on the total variation of $\bar\rho$, $T$,
$w_{max}$,
$\|\varphi\|_{\infty}$, $\|\varphi'\|_{\infty}$, $\mu$ defined in (\ref{eq:def-mu})
and $K_\varphi$ defined in (\ref{additionalTVduetosourceterm}).
\end{proposition}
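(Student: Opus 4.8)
The plan is to compare the two approximate solutions $(\rho_\nu,y_\nu)$ and $(\rho_{\nu+1},y_{\nu+1})$ through the generalized tangent vector technique, so that the whole estimate reduces to the chain of inequalities already assembled. First I would connect the two initial data by the regular path of Lemma~\ref{lem:initialshifts}: at $\theta=0,1$ it equals $\bar\rho_{\nu+1}$ and $\bar\rho_\nu$, it has $v^\theta\equiv 0$ (piecewise constant) and $y^\theta(0)\equiv y_0$, hence $\eta_0=0$, and its shifts have norm $\le 2^{-(\nu+1)}$, so by (\ref{eq:ch2:length}) the initial tangent-vector mass carried by the density waves satisfies $V_0=\sum_k|\Delta\rho^\theta_k|\,|\xi^\theta_k|\le TV(\bar\rho)\,2^{-(\nu+1)}$. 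Then I would run the WFT--ODE scheme simultaneously for all $\theta$; the evolved path $\gamma_t$ stays regular (only finitely many extra non-differentiability points appear, due to changes in the order of interactions), and $\|v^\theta\|_{L^1}=0$ is preserved because Steps~3--4 re-sample $\rho^{source}$ back to the piecewise constant $\rho^{updated}$, so the tangent-vector norm is always carried entirely by $(\xi,\eta)$.

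Next I would feed this into the discrete recursions. Lemmas~\ref{le:step3-tv-est}, \ref{lemma4.1}, \ref{lem:wavebuszone}, \ref{lem:source} and Remark~\ref{rem:shftsb} give (\ref{eq:est:TVj})--(\ref{eq:est:etaj}), hence (\ref{eq:est:K}) with $K=\max\{K_\varphi,2\|\varphi'\|_\infty,w_{max}/\mu\}$; here one must first note that the constant $\mu>0$ of Lemma~\ref{lemma3p1} is available on all of $[0,T]$, which is exactly what the standing hypothesis on $T$ together with Lemma~\ref{le:T-est} guarantees (so condition (\ref{eq:cond:keylemma}) is met for $\Delta t$ small). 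Using that the total interaction charge of the bottleneck with $\rho$-waves is finite, $\sum_j(TV)^i_j\le TV(\bar\rho)+KT$, and comparing (\ref{eq:est:K}) with the decoupled auxiliary system (\ref{eq:est:K'}), I obtain $\eta_j\le e^{K\,TV(\bar\rho)+KT}\mu_j$, and then from $\mu_{j+1}\le(1+K^2e^{KT}\Delta t)\mu_j$ with $\mu_1=Ke^{KT}V_0$ the bound (\ref{eq:est:etafinal}),
$$\eta_j\le e^{K\,TV(\bar\rho)+KT}\,e^{K^2Te^{KT}}\,Ke^{KT}\,V_0=:C\,V_0,$$
where $C$ depends only on $TV(\bar\rho)$, $T$, $w_{max}$, $\|\varphi\|_\infty$, $\|\varphi'\|_\infty$, $\mu$ (from (\ref{eq:def-mu})) and $K_\varphi$ (from (\ref{additionalTVduetosourceterm})), and in particular is independent of $\nu$ and of $\Delta t$.

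Finally I would pass from grid times to arbitrary $t\in[0,T]$: $\eta^\theta$ is nondecreasing in $t$ and changes only at interaction times of the bottleneck with $\rho$-waves (Proposition~\ref{stimaetapiu}), so the discrete bound yields $|\eta^\theta(t)|\le C\,V_0$ for every $t\in[0,T]$ and every $\theta\in[0,1]$; since $y$ is scalar one concludes
$$|y_{\nu+1}(t)-y_\nu(t)|=|y^1(t)-y^0(t)|\le\int_0^1|\eta^\theta(t)|\,d\theta\le C\,V_0\le C\,TV(\bar\rho)\,2^{-(\nu+1)},$$
which is the claim with $K_1=C\,TV(\bar\rho)$. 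If $\Delta t$ is part of the scheme rather than a free parameter, one lets $\Delta t\to 0$ at fixed $\nu$, and the bound survives because $K_1$ does not involve $\Delta t$.

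\textbf{Main obstacle.} The delicate step is the middle one: the recursion for $\eta_{j+1}$ contains the tangent-vector mass $V^i_j$ of the $\rho$-waves that hit the bottleneck, while $V^p_{j+1}$ itself carries the magnification factors $(1+K\Delta t)e^{\|\varphi'\|_\infty\Delta t}$ and is, in turn, fed by the source term $K\Delta t\,\eta_{j+1}$, so the two recursions are genuinely coupled and one must check the feedback does not blow up (e.g.\ into a double exponential in $\nu$). The way out is to exploit that the total interaction charge $\sum_j(TV)^i_j$ is bounded by $TV(\bar\rho)+KT$, that $\eta$ is monotone so the worst ordering is ``all interactions happen last'', and that bounding $V^p$ a priori by $e^{KT}$ decouples (\ref{eq:est:K}) into the scalar system (\ref{eq:est:K'}); carefully checking that this is indeed the extremal configuration — and that $\mu$ stays bounded away from $0$ throughout $[0,T]$ via Lemmas~\ref{lemma3p1} and \ref{le:T-est} — is where the real work lies.
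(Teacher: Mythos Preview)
Your proposal is correct and follows essentially the same approach as the paper: build the regular path from Lemma~\ref{lem:initialshifts} to get $V_0\le TV(\bar\rho)\,2^{-(\nu+1)}$, feed the recursions (\ref{eq:est:K}) into the auxiliary system (\ref{eq:est:K'}) to reach (\ref{eq:est:etafinal}), and read off the bound on $|y_{\nu+1}-y_\nu|$ from the tangent vector $\eta$. The paper's own proof is just the two-line version of exactly this, relying on the machinery already assembled in the preceding subsections; your write-up simply makes the intermediate steps (integration over $\theta$, passage from grid times to arbitrary $t$, role of Lemma~\ref{le:T-est} in keeping $\mu>0$) explicit.
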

\begin{proof}
From (\ref{eq:est:etafinal}), we have that $\eta_\nu$ can be estimated in terms of the total variation of $\bar{\rho}$, $T$, the constant $K$ and $V_0$.
But $V_0$ is estimated by the total variation of $\bar{\rho}$
and the initial shifts, thus we conclude by
Lemma \ref{lem:initialshifts}.
\end{proof}

\section{Godunov-ODE-FS scheme}\label{sec:GOF}

Here we introduce a numerical scheme called Godunov-ODE-FS (GOF),
which is based on fractional step method combining Godunov scheme for the PDE and
exact solution for the ODE. The dynamics of (\ref{lwr}) and (\ref{ode}) are thus solved separately at each iteration.
The use of Godunov scheme is motivated by its easy implementation and its connection with the modelling
of vehicular traffic problems, see \cite{GPbook,Leb}.

\subsection{Godunov scheme for the PDE}\label{Godunov_ode}
Following the ideas described in Section \ref{WFT} for the WFT algorithm, we report in the following the modified Godunov scheme for a conservation law with source term
\begin{equation}\label{lc+st_bis}
\partial_t \rho + \partial_\rho g ~\partial_x \rho = - \partial_x g
\end{equation}
(see (\ref{lc+st})).
We first introduce a numerical grid, denoting by $\Delta x$ the space mesh size and by $\Delta t$
the time mesh size. Moreover, we denote by $(t_l,x_m)=(l \Delta t,m \Delta x)$ the grid points for
$l=0,1,\dots,L$, $m=0,1,\dots,M$, where $L$ and $M$ are, respectively, the number of time and space nodes of the
grid, and by $C^l_m$ the discretization cell $(t_l,t_{l+1}) \times (x_{m-1}, x_m)$.
For a function $u$ defined on the grid we write $u_m^l=u(t_l,x_m)$.

Let us denote by
\[
W\left(\frac{x-x_{m-\frac{1}{2}}}{\Delta t};\rho_\ell,\rho_r\right)
\]
the self-similar entropy solution of the unique Riemann problem defined on $C^l_m$ (with discontinuity point $x_{m-\frac{1}{2}}$) and let us define
the numerical flux $F$ as $F(\rho_\ell,\rho_r,t,x)=g(W(0;\rho_\ell,\rho_r),t,x)$.

First, we replace the initial datum $\bar\rho(x)$ by a piecewise constant approximation,
\[
\rho^{0}_m = \frac{1}{\Delta x} \int_{x_{m-\frac{1}{2}}}^{x_{m+\frac{1}{2}}} \bar\rho(x) d x.
\]
Then, we alternate a single step of the classical Godunov scheme 
$$
\rho^{*}_{m}=\rho^{l}_{m}-\frac{\Delta t}{\Delta x}\Big(F(\rho^{l}_{m},\rho^{l}_{m+1},t_l,x_{m+\frac 1 2})-F(\rho^{l}_{m-1},\rho^{l}_{m},t_l,x_{m-\frac 1 2})\Big)
$$
with a single step of the Euler scheme
$$
\rho^{l+1}_{m}=\rho^{*}_m+\Delta t (-\partial_x g(\rho^{*}_m,t_l,x_m)).
$$

\subsection{GOF scheme}\label{GOF}
Before describing the GOF scheme, we point out that shock waves solutions to (\ref{lwr})
have velocities depending on the moving bottleneck position $y$.
The modified Godunov scheme can be thus used, provided that, in each discretization cell, the shock speed
$\lambda(\rp,\rm, \zeta)$ defined in (\ref{rh}) does not change sign as a function of $\zeta$.
This is established by next Lemma.
\begin{lemma}\label{lemma5}
Let $\bar f (\rho) = \rho(1-\rho)$ and $\rho_\ell$ (resp., $\rho_r$) the left (resp., the right) state
of a discontinuity. Set $\bar \lambda = \frac{\bar f(\rho_r) - \bar f(\rho_\ell)}{\rho_r - \rho_\ell}$. Then,
\bee\label{segno}
sgn(\lambda(\rp,\rm,\zeta)) = sgn(\bar \lambda).
\ede
\end{lemma}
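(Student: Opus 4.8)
The plan is to compute $\lambda(\rho_r,\rho_\ell,\zeta)$ explicitly using the product structure of the flux $f(\rho,\zeta)=\rho\varphi(\zeta)(1-\rho)=\varphi(\zeta)\bar f(\rho)$ and to factor out the purely spatial factor $\varphi(\zeta)$. First I would write, from the definition \eqref{rh},
\begin{equation*}
\lambda(\rho_r,\rho_\ell,\zeta)=\frac{\rho_r v(\rho_r,\zeta)-\rho_\ell v(\rho_\ell,\zeta)}{\rho_r-\rho_\ell}
=\frac{\varphi(\zeta)\bar f(\rho_r)-\varphi(\zeta)\bar f(\rho_\ell)}{\rho_r-\rho_\ell}
=\varphi(\zeta)\,\frac{\bar f(\rho_r)-\bar f(\rho_\ell)}{\rho_r-\rho_\ell}
=\varphi(\zeta)\,\bar\lambda.
\end{equation*}
This identity is the whole content of the lemma: the flux $f(\rho,\zeta)$ is a spatially modulated version of the homogeneous LWR flux $\bar f$, and the modulation factor $\varphi(\zeta)$ enters the Rankine--Hugoniot speed as a multiplicative constant (constant in $\rho_\ell,\rho_r$, varying only in $\zeta$).

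Next I would invoke the sign hypothesis on $\varphi$: by \eqref{eq:vmax}, $\varphi(\zeta)\geq\vmin>0$ for every $\zeta\in\R$, so $\varphi(\zeta)$ is strictly positive. Multiplying $\bar\lambda$ by a strictly positive quantity does not change its sign, hence
\begin{equation*}
\mathrm{sgn}\big(\lambda(\rho_r,\rho_\ell,\zeta)\big)=\mathrm{sgn}\big(\varphi(\zeta)\,\bar\lambda\big)=\mathrm{sgn}(\bar\lambda),
\end{equation*}
which is exactly \eqref{segno}. This also shows the sign is independent of $\zeta$, which is the property actually needed to justify applying the modified Godunov scheme cell by cell.

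There is essentially no obstacle here: the argument is a one-line algebraic factorization followed by positivity of $\varphi$. The only mild care needed is the degenerate case $\rho_\ell=\rho_r$, where neither $\bar\lambda$ nor $\lambda$ is defined by a difference quotient; in that case there is no discontinuity and the statement is vacuous, so I would simply assume $\rho_\ell\neq\rho_r$ throughout, as is implicit in speaking of ``a discontinuity.'' I would present the proof in the two short displays above, noting explicitly that the key structural fact being exploited is the separable form $f(\rho,\zeta)=\varphi(\zeta)\bar f(\rho)$ coming from \eqref{flusso}.
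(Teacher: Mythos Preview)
Your proof is correct and follows essentially the same approach as the paper: factor $\lambda(\rho_r,\rho_\ell,\zeta)=\varphi(\zeta)\,\bar\lambda$ using the separable form of the flux, then invoke $\varphi(\zeta)>0$ to conclude that the sign is preserved. The paper's proof is even terser, but the content is identical.
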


\begin{proof}
Since we have:
\begin{eqnarray*}
\lambda(\rp,\rm, \zeta) &=& \frac{f(\rho_r,\zeta) - f(\rho_\ell,\zeta)}{\rho_r-\rho_\ell} = \varphi(\zeta) \frac{\bar f(\rho_r) - \bar f(\rho_\ell)}{\rho_r - \rho_\ell} =\\ &=& \bar \lambda \cdot \varphi(\zeta),
\end{eqnarray*}
 the result (\ref{segno}) is easily achieved noticing that $\varphi(\zeta) > 0$.
\end{proof}
The situation described by Lemma \ref{lemma5} is depicted in Figure\ \ref{fig:per_lemma_5}.

\begin{figure}
\begin{center}
\includegraphics[height=5cm,width=10cm]{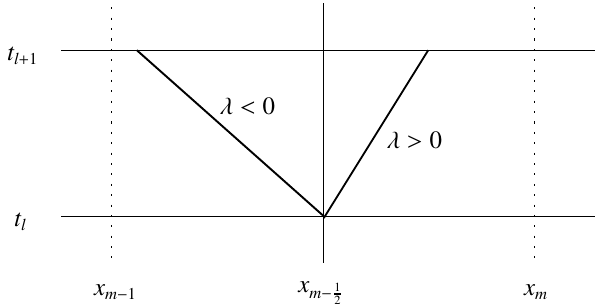}
\caption{Discontinuities have speed sign independent of the moving bottleneck position.
Then, in each discretization cell, they are either always positive or always negative.}
\label{fig:per_lemma_5}
\end{center}
\end{figure}

The discretization grid is defined as for the Godunov scheme and the value of density on grid points is called $\rho^l_m$.
The moving bottleneck position will be defined for times $t_l$ and denoted by $y^l$.
\begin{itemize}
\item[Step 0.] We approximate the initial datum $\bar\rho$ as for the Godunov scheme and set $y^0=y(0)$.
\item[Step 1.]
We assume that $y^l$ and $\rho^l_m$ were defined by previous step.\\
We keep fixed the position of the moving bottleneck, thus set $y(s) = y^l$ for $s \in [t_l, t_{l+1}]$.
Define $W_m=W_m(t,x)$ to be the self similar solution to the Riemann problem defined in $x_{m-\frac{1}{2}}$
(for the bottleneck position fixed).
This solution is formed by a wave, whose velocity varies in time but mantains its sign constant as proved by Lemma \ref{lemma5}. We can use Gauss-Green formula as for Godunov scheme and define
\begin{eqnarray*} \label{GOD-FS}
 \rho^{l+1}_m &=& \rho^l_m - \frac{1}{\Delta x} \int_{t_l}^{t_{l+1}}
 \Big[f\left(W_{m+1}\left(s,x_{m+\frac12}\right),x_{m+\frac12}-y^l\right)- f\left(W_{m}\left(s,x_{m-\frac12}\right),x_{m-\frac12}-y^l\right)\Big] \ ds.
 \end{eqnarray*}
 \item[Step 2.]
There exists $m$ such that $y^l\in [x_{m-1-\frac12},x_{m-\frac12})$, see Figure\ \ref{fig:stepB}.
	\begin{figure}
	\begin{center}
	\includegraphics[height=5cm,width=10cm]{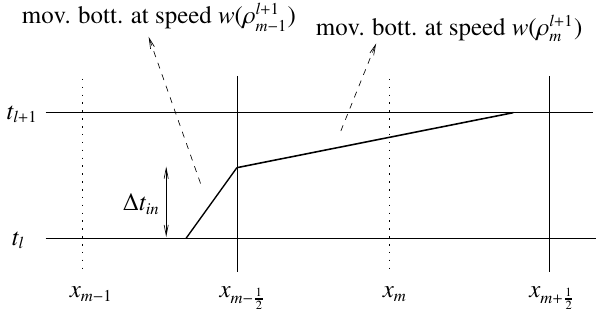}
	\vspace{.5cm}
	\caption{Iterative Step 2 of the GOF algorithm.}
	\label{fig:stepB}
	\end{center}
	\end{figure}
Using the computed density $\rho^{l+1}$,
the initial velocity of the moving bottleneck is $w(\rho^{l+1}_{m-1})$ up to the interaction time
with the line $x=x_{m-\frac12}$ then it travels with velocity $w(\rho^{l+1}_{m})$. More precisely,
we compute the interaction time as
\[
\Delta t_{in}=\frac{x_{m-\frac12}-y^l}{w(\rho^{l+1}_{m-1})}.
\]
Now, if $\Delta t_{in}\leq\Delta t$ we set
\[
y^{l+1}=x_{m-\frac12}+(\Delta t-\Delta t_{in})\ w(\rho^{l+1}_{m}),
\]
otherwise we set
\[
y^{l+1}=y^l+\Delta t\ w(\rho^{l+1}_{m-1}).
\]
\end{itemize}

\begin{remark}
Notice that for the flux (\ref{flusso}) the CFL condition reads
  \bee \label{CFL2}
  \Delta t \ \sup_{m,l} \left\{\sup_{\rho \in
 I(\rho^l_{m-1/2},\rho^l_{m+1/2})} \left|\frac{\partial f}{\partial \rho}(\rho,\zeta^l_m)\right| \right\}\le \frac{\Delta x}{2},
 \ede
where $\zeta^l_m = x_m - y^l$ and $I(\rho^l_{m-1/2},\rho^l_{m+1/2})$ is the interval with endpoints $\rho^l_{m-1/2}$ and $\rho^l_{m+1/2}$.
In particular, CFL is always satisfied if $\Delta t \leq \frac{1}{2\bar v}\Delta x$.
\end{remark}

\section{Multiple bottlenecks}\label{sec:multi}
In this section we introduce a model describing the evolution of
car traffic flow along a road in presence of $P>1$
moving bottlenecks. In this case, we face
the event that two slower vehicles can occupy the same  position,
which could lead to a non realistic  dropping of flux capacity.
To solve this problem, we introduce two modelling choices:
\begin{itemize}
\item[(A)] Two moving bottlenecks can overtake each other, thus we re-define the flux function.
\item[(B)] Two moving bottlenecks cannot overtake each other.
To achieve that, we re-define the velocity function of moving
bottlenecks.
\end{itemize}

\subsection{Model A}
Let us consider the following PDE--ODE coupled system
\begin{equation*}
	\begin{cases}
		\partial_{t}\rho+\partial_{x}f(\rho,x,y_1(t),\ldots,y_P(t))=0 &
		t>0,\ x\in\R,\ P\in\N\\
		\rho(0,x)=\bar\rho(x)& x\in\R \\
		 \dot{y_i}(t)=w_i(\rho(t,y_i))&
		 i=1,\ldots,P \\
		 y_i(0)=y_{i,0} & i=1,\ldots,P,
	\end{cases}
\end{equation*}
where $y_i=y_i(t)$ is the position of the $i$-th moving bottleneck, and $\rho=\rho(t,x)$ is the density on the road.
The  flux function $f$ is given by
	\begin{equation}\label{def::flux:multi}
	   f(\rho,x,y_1(t),\ldots,y_P(t))=
	   \rho\cdot v(\rho(t,x),x,\mathbf{y}),
	\end{equation}
where $\mathbf{y}=(y_1,\ldots,y_P)$ and the smooth function $v(x,\rho,\mathbf{y})$ is defined as
	\begin{equation}\label{def::vel}
	 v(\rho,x,\mathbf{y})=\hat v(\rho)\Phi(x,\mathbf{y}), 
	\end{equation}
	\begin{equation*}
	\Phi(x,\mathbf{y}(t))=
	\min\{\varphi_1(x-y_1),\ldots,\varphi_P(x-y_P)\},
	\end{equation*}
with the average velocity of cars $\hat v(\rho)=1-\rho$. Each function $\varphi_i(\zeta)$
takes into account the flux capacity drop due to the
presence of the $i$-th bottleneck and we assume:
\[
0<\vmin_i\leq \varphi_i(\zeta)\leq \vmax,\qquad
\varphi_i(0) = \vmin_{i},
\]
\[
\vmax=\max_{\zeta\in\R}\varphi_i(\zeta)=
\varphi_i(\zeta)\ \ \hbox{ for  every}\ \zeta\in\R\setminus
[-\beta_i,\beta_i]\ \ \hbox{ and for every}\ i=1,\ldots,P.
\]
Speeds of moving bottlenecks are defined as
$w_i(\rho)=w_{max}^i(1-\rho)$ with $\vmin_{i}> w_{max}^i$.

Now we admit the presence of many slow-moving vehicles in the car traffic flow,
with possibly different characteristics among each other.
Therefore, in general, functions $w_{i}(\cdot)$ and $\varphi_{i}(\cdot)$ are different
and $P$ can be  arbitrarily large.

In view of our definition of the function $\Phi$ in the flux,
when two slow vehicles are far enough, we go back to the case of a single moving bottleneck.
On the other side, if two slow vehicles occupy the same position, the flux capacity drop is the largest among the two.

\subsection{Model B}
We now introduce the alternative modelling choice in (B), which avoid overtaking between slow vehicles,
again described by a coupled PDE-ODE system
\begin{equation}\label{statement}
	\begin{cases}
		\partial_{t}\rho+\partial_x f(\rho, x,y_1(t),\ldots,y_P(t))=0 &
		t>0,\ x\in\R,\ P\in\N\\
		\rho(0,x)=\bar\rho(x)& x\in\R \\
		 \dot{y}_i(t)=w_i(\rho(t,y_i),  y_{i}, y_{i+1})&
		 i=1,\ldots,P-1 \\
		 \dot{y}_P(t)=w_P(\rho(t,y_P))&  \\
		 y_i(0)=y_{i,0} & i=1,\ldots,P,
	\end{cases}
\end{equation}
with $y_{1,0}<\ldots<y_{P,0}$. The flux function is defined as in
\eqref{def::flux:multi}--\eqref{def::vel}, but this time the function $\Phi$ is given by
	\begin{equation}\label{def::cutoff}
	\Phi(x,\mathbf{y})= \prod_{i=1}^P\varphi_i(x-y_i(t)).
	\end{equation}
We define differently also the velocities of moving bottlenecks. More precisely,
to avoid overtaking among them, and indeed to avoid the superposition of the zones where functions $\varphi_i$
take values less than $\vmax$, we suitable define the velocity functions $w_i(\rho,y_i,y_{i+1})$.
First let $\omega_i=\omega^i_{max}(1-\rho)$, with $0<\omega^i_{max}<\vmin_i$, and then back recursively on $i$ set
	\begin{equation}\label{eq:defvelbuses}
	w_i(\rho, y_{i}, y_{i+1})
		=\begin{cases}
		\omega_{i}(\rho), & \hbox{if}\ y_{i+1} - y_{i} \geq 2(\beta_{i+1} +
		\beta_{i})\\
		 \min\{\omega_i(\rho), w_{i+1}(\rho, y_{i+1}, y_{i+2})\}, & \hbox{if}\
		 y_{i+1} - y_{i} \leq \beta_{i+1} + \beta_{i}
	\end{cases}
	\end{equation}
and complete the definition for $y_{i+1} - y_{i}\in [\beta_{i+1} + \beta_{i},2(\beta_{i+1} + \beta_{i})]$
by smooth monotone interpolation.
This choice introduces a follow-the-leader flavour in the microscopic dynamic and avoids the modelling problems described above.
A bottleneck at position $y_{i}$ moves with velocity $\omega_{i}(\rho)$,
depending only on the density of cars along the road, provided the distance
from the vehicle ahead in position $y_{i+1}$ is sufficiently large, namely, larger
than $2(\beta_{i+1} + \beta_{i})$.  If such distance decreases, then $y_i$ starts to decelerate and
eventually move with the same velocity as the vehicle ahead when
$y_{i+1} - y_{i} = \beta_{i+1}+\beta_{i}$.

\subsection{GOF algorithm for multiple bottlenecks}\label{sec:algorithm}

We introduce a Godunov-ODE-FS algorithm for model (B), while model (A) can be treated in a manner completely
similar to the case of a single bottleneck. Notice that for model (B) slow moving vehicles can influence each other,
thus the numerics is more involved.

The discretization grid is defined as for the Godunov scheme and the value of density on grid points is called $\rho^l_m$.
The moving bottlenecks' positions will be defined for times $t_l$ and denoted by $y_i^l$, $i=1,\ldots,P$.
For consistency we suppose
	\begin{equation*}
 	y_1(0)<y_2(0)-(\beta_2+\beta_1)<\ldots<y_P(0)-(\beta_P+\beta_{P-1}).
 	\end{equation*}
Notice that the $P$-th bottleneck plays the role of the leader and it is not influenced by positions of other slow vehicles.
Thus its trajectory can be traced  as in Section \ref{GOF}.
Furthermore, because of definition of $w_i$,
the $i$-th bottleneck's trajectory is only influenced by the $(i+1)$-th bottleneck trajectory.
More precisely, in the scheme we let the $i$-th bottleneck proceed with velocity $\omega_i$ as long as the distance with the
$(i+1)$-th bottleneck is larger than $\beta_{i+1}+\beta_{i}$, otherwise we let it proceed with the velocity of the $(i+1)$-th bottleneck.\\
Let us first note that, as for the case of a single bottleneck,
the velocity of any wave inside each cell change in time, but its sign does not. Indeed, let $\lambda(\rho_r,\rho_\ell,x,\mathbf{y})$ be the speed of the wave, then, using notation of Lemma \ref{lemma5}, we have
	\begin{align*}
	\lambda(\rho_r,\rho_\ell,x,\mathbf{y})&=
	\frac{f(\rho_r,x,\mathbf{y})-f(\rho_\ell,x,\mathbf{y})}{\rho_r - \rho_\ell}\\
	&=\Phi(x,\mathbf{y})\frac{\bar f(\rho_r)-\bar f(\rho_\ell)}{\rho_r-\rho_\ell}=\Phi(x,\mathbf{y})\bar\lambda(\rho_r,\rho_\ell)
	\end{align*}
and, since $\Phi(x,\mathbf{y})>0$, it follows that
\[
sgn(\lambda(\rho_r,\rho_\ell,x,\mathbf{y}))=sgn(\bar\lambda(\rho_r,\rho_\ell)).
\]
We are now ready to introduce the GOF algorithm.
\begin{itemize}
\item[Step 0.] We approximate the initial datum $\bar\rho$ as for the Godunov scheme and set $y^0_i=y_i(0)$.
\item[Step 1.]
We assume that $y_i^l$, for $i=1,\ldots,P$, and $\rho^l_m$ were defined by previous step.\\
We keep fixed the positions of moving bottlenecks, thus set $y_i(s) = y_i^l$ for $s \in [t_l, t_{l+1}]$.
Let $W_m=W_m(t,x)$ be the self similar solution to the Riemann problem defined in $x_{m-\frac{1}{2}}$
(for bottlenecks' positions fixed.)
This solution is formed by waves, whose velocity sign is constant.
Setting $\mathbf{y}^l=\mathbf{y}(t_l)$ and using the Gauss-Green formula, the scheme is expressed in the integral formulation as

	\begin{equation*}
		  \rho_m^{l+1}=\rho_m^{l}-\frac1{\Delta x}\int_{t_l}^{t_{l+1}}\left[
		  f\left(W_{m+1}\left(s,x_{m+\frac12}\right),x_{m+\frac12},\mathbf{y}^l  \right)- f\left(W_m\left(s,x_{m-\frac12}\right),x_{m-\frac12},\mathbf{y}^l \right)   \right] \, ds.
		\end{equation*}
\item[Step 2.]  We compute $y^{l+1}_P$ using the density $\rho^{l+1}$ obtained at Step 1, as for the case of a single bottleneck.
More precisely, there exists $m$ such that $y^l_P\in [x_{m-1-\frac12},x_{m-\frac12})$; $y_P$
moves with velocity $\omega_P(\rho^{l+1}_{m-1})$ until the interaction time $\Delta t_{in}$
with the line $x=x_{m-\frac12}$, provided that $\Delta t_{in}\leq\Delta t$.
After that time, $y_P$ moves with speed $\omega_P(\rho^{l+1}_{m})$.
Finally, if $\Delta t\leq\Delta t_{in}$ then we set
\[
y^{l+1}_P=y^l_P+\Delta t\ \omega_P(\rho^{l+1}_{m-1}),
\]
otherwise
\[
y^{l+1}_P=x_{m-\frac12}+(\Delta t-\Delta t_{in})\ \omega_P(\rho^{l+1}_{m}).
\]
\item[Step 3.](Figure 7)
We compute $y^{l+1}_i$ by backward recursion on $i=P-1,\ldots,1$.
More precisely, we define a trajectory $y_i(t)$ for every $t\in [t_l,t_{l+1}]$ and
set $y^{l+1}_i=y_i(t_{l+1})$.\\
First for $i=P$, if $\Delta t\leq\Delta t_{in}$ we set
\[
y_P(t)=y^l_P+(t-t_l)\ \omega_P(\rho^{l+1}_{m-1}),
\]
otherwise
\[
y^{l+1}_P=\left\{
\begin{array}{ll}
y^l_P+(t-t_l)\ \omega_P(\rho^{l+1}_{m-1}) & t_l\leq t\leq t_l+\Delta t_{in}\\
x_{m-\frac12}+(t-\Delta t_{in})\ \omega_P(\rho^{l+1}_{m})
& t_l+\Delta t_{in}< t\leq t_{l+1}.\\
\end{array}\right.
\]
Now, fix $i$ and assume to have defined $y_j(t)$ for $j\geq i+1$ and $t\in [t_l,t_{l+1}]$.
Let $m$ be such that $y^l_i\in [x_{m-1-\frac12},x_{m-\frac12})$ and define $\Delta t_{in}$ the time at which the line
$y^l_i+(t-t_l)\omega_i(\rho^{l+1}_{m-1})$ intersects the vertical line $x=x_{m-\frac12}$.
If $\Delta t\leq\Delta t_{in}$ we set
\[
\tilde{y}_i(t)=y^l_i+(t-t_l)\ \omega_i(\rho^{l+1}_{m-1}),
\]
otherwise
\[
\tilde{y}^{l+1}_i=\left\{
\begin{array}{ll}
y^l_i+(t-t_l)\ \omega_i(\rho^{l+1}_{m-1}) & t_l\leq t\leq t_l+\Delta t_{in}\\
x_{m-\frac12}+(t-\Delta t_{in})\ \omega_i(\rho^{l+1}_{m})
& t_l+\Delta t_{in}< t\leq t_{l+1}.
\end{array}\right.
\]
Now, if there exists a time $\Delta t_{b}< \Delta t$ such that
$y_{i+1}(t_l+\Delta t_{b})-\tilde{y}_{i}(t_l+\Delta t_{b})=\beta_{i+1}+\beta_i$, then we set
\[
{y}^{l+1}_i=\left\{
\begin{array}{ll}
\tilde{y}_i(t) & t_l\leq t\leq t_l+\Delta t_{b}\\
y_{i+1}(t)-(\beta_{i+1}+\beta_i) & t_l+\Delta t_{b}< t\leq t_{l+1},
\end{array}\right.
\]
otherwise we simply set $y_i(t)=\tilde{y}_i(t)$ for every $t\in [t_l,t_{l+1}]$.
\end{itemize}

\begin{figure}[h!]
    \begin{center}
	\includegraphics[]{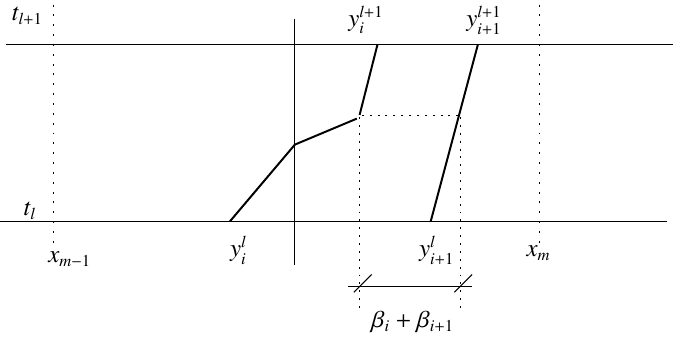}
	\caption{Iterative Step 3 of the GOF algorithm. The bottleneck
	behind takes the speed of the one ahead when the distance
	is equal to $\beta_{i}+\beta_{i+1}$.}
	\label{fig:stepB2}
    \end{center}
\end{figure}

\begin{remark}
Notice that for the dynamics (\ref{statement}) the CFL condition reads
  \bee \label{CFL3}
  \Delta t \ \sup_{m,l} \left\{\sup_{\rho \in
 I(\rho^l_{m-1/2},\rho^l_{m+1/2})} \left|\frac{\partial f}{\partial \rho}(x_m,\mathbf{y}^l_m,\rho)\right| \right\}\le \frac{\Delta x}{2},
 \ede
where $I(\rho^l_{m-1/2},\rho^l_{m+1/2})$ is the interval with endpoints $\rho^l_{m-1/2}$ and $\rho^l_{m+1/2}$.
In particular, CFL is always satisfied if $\Delta t \leq \frac{1}{2\max_i\vmax_i}\Delta x$.
\end{remark}

\section{Numerical tests}\label{sec:simulations}

In this section we show some simulations obtained by models \eqref{lwr}--\eqref{ode} and  \eqref{statement}.
In particular, we focus on interactions of bottlenecks with different type of waves, namely shocks and rarefactions.
For both models we implement the corresponding GOF scheme.

\subsection{Model \eqref{lwr}--\eqref{ode}}
To compute the approximate solution to \eqref{lwr}--\eqref{ode} we use the numerical algorithm presented
in Section \ref{GOF}.
We set the space and the time mesh size of the grid to be $\Delta x=0.02$ and $\Delta t=0.01$.
Setting parameters, we assume $w(\rho)=w_{max}(1-\rho)$ with $w_{max}=0.4$ and
	\begin{equation*}
	\varphi(\zeta)=\left\{
	\begin{array}{ll}
	\vmax-(\vmax-\vmin)\exp\left\{\ -\frac{\zeta^2}{\beta-|\zeta|}\right\} &
	 \textrm{if}\ |\zeta|\leq\beta\\
	\vmax & \textrm{otherwise},
	\end{array}\right.
	\end{equation*}
where $\vmax=1$, $\vmin=0.6$ and $\beta=0.1$.

\begin{figure}[ht!]
\begin{minipage}[c]{0.5\textwidth}
\begin{center}
\fbox{
\includegraphics[height=\textwidth]{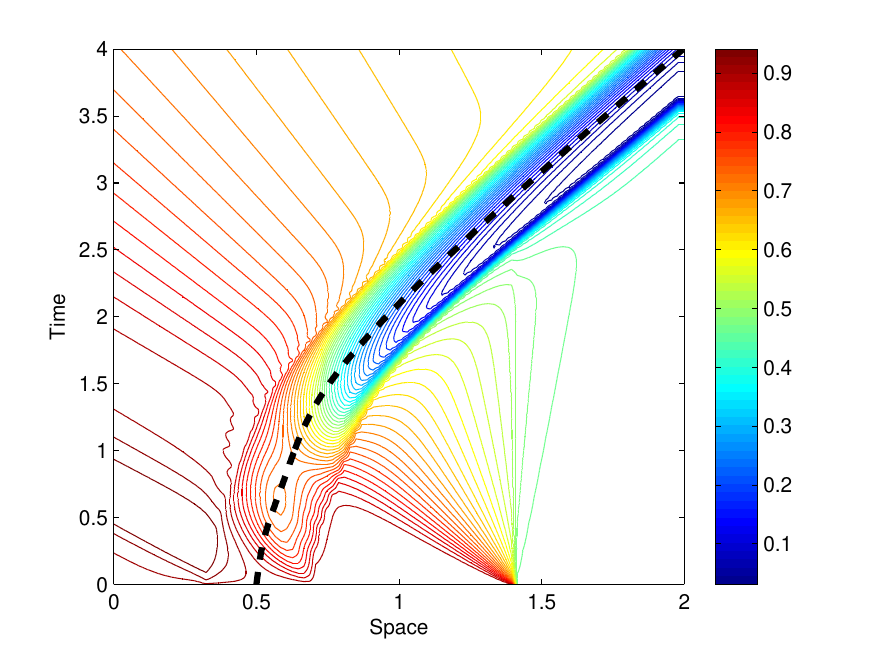}}
\end{center}
\end{minipage}
\hspace{30mm}
\begin{tabular}{ll}
\hline
\multicolumn{2}{c}{\bf initial density $\bar\rho(x)$}\\
\hline
$\rho_\ell=0.9$& if $x\leq1.4$ \\
$\rho_r=0.45$& if $x>1.4$\\
\hline
\hline
\multicolumn{2}{c}{\bf initial position }\\
\hline
$y_0=0.5$ \\
\hline
\end{tabular}
\caption{A bottleneck interacting with a rarefaction wave.}
\label{figure:oneRar}
\end{figure}

In Figure\ \ref{figure:oneRar} a moving bottleneck starting from $y_0=0.5$ interacts with
a rarefaction wave centred in $x=1.4$, generated by the Riemann datum $\rho_\ell=0.9$ for  $x\leq1.4$ and $\rho_r=0.45$ for $x>1.4$.  The vehicle is initially moving with low speed ($w=0.04$) due to the high density until it interacts
with the first characteristic of the rarefaction;
after that it accelerates since the density is decreasing.

\begin{figure}[hb!]
		\begin{tabular}{ll}
        	\hline
        	\multicolumn{2}{c}{\bf initial density $\bar\rho(x)$}\\
     		\hline
       	 	$\rho_\ell=0.3$& if $x\leq1.4$ \\
        	$\rho_r=0.9$& if $x>1.4$\\
        	\hline
	    	\hline
	        \multicolumn{2}{c}{\bf initial position }\\
     	        \hline
     	         $y_0=0.5$ \\
     	        \hline
        	\end{tabular}
        \hspace{1mm}
		\begin{minipage}[c]{0.5\textwidth}
		\begin{center}
		\fbox{
   		\includegraphics[height=\textwidth]{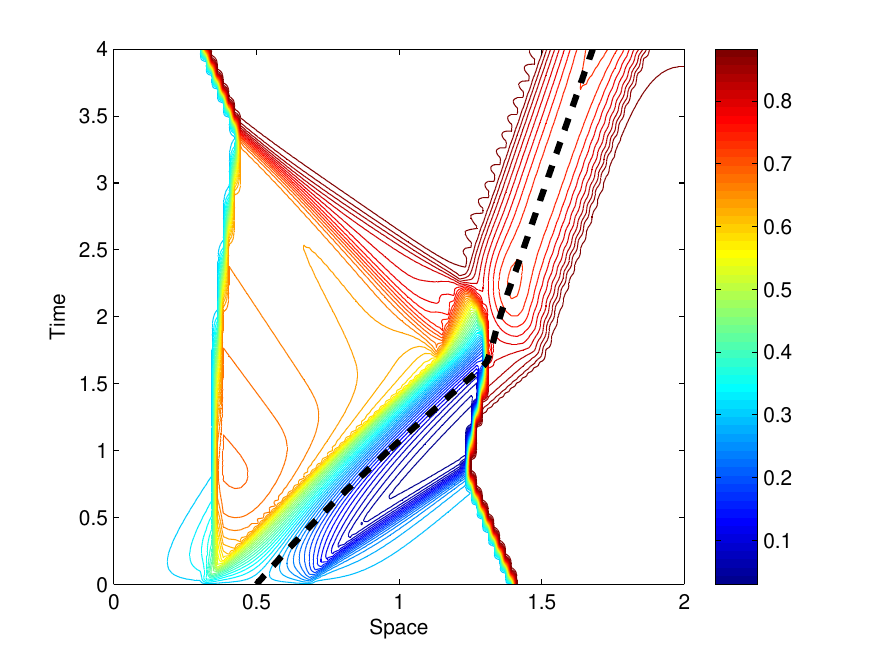}}
		\end{center}
		\end{minipage}
		\caption{A bottleneck interacting with a shock.}
		\label{figure:oneShock}
\end{figure}

In Figure \ref{figure:oneShock} a bottleneck starting from $y_0=0.5$ is interacting with a backward moving shock starting from $x=1.4$.
The latter is generated by the Riemann datum $\rho_\ell=0.3$ for  $x\leq1.4$ and $\rho_r=0.9$ for $x>1.4$, so that the speed of the shock,
before it interacts with the bottleneck, is $\lambda=-0.2$.
On a following time interval, during the interaction with the bottleneck
(from around $t=1$ until around $t=2.5$ in Figure\ \ref{figure:oneShock}), the speed of the shock almost vanishes,
because of the capacity drop caused by the bottleneck.

\begin{figure}[t!]
		 \begin{minipage}[c]{0.5\textwidth}
		 	\begin{center}
			 \fbox{
   			 \includegraphics[height=\textwidth]{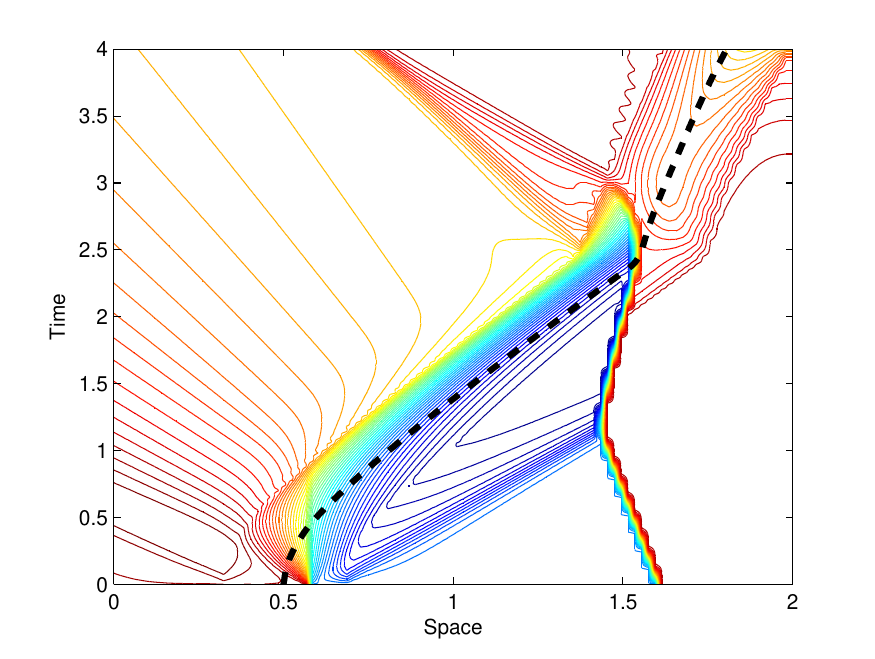}}
			 \end{center}
		 \end{minipage}
	     \hspace{30mm}
  	     \begin{tabular}{ll}
   		  \hline
   		  \multicolumn{2}{c}{\bf initial density $\bar\rho(x)$}\\
  		  \hline
   		  $0.9$ & if $x\leq 0.6$ \\
		   $0.25$ & if $0.6< x\leq 1.6$ \\
   		    $0.9$ &  if $x>1.6$\\
    		    \hline
		    \hline
	        \multicolumn{2}{c}{\bf initial position }\\
     	        	\hline
     	        	 $y_0=0.5$ \\
     	        	\hline
     	       \end{tabular}
	\caption{One bottleneck interacting both with a rarefaction and a shock.}
	\label{figure:oneBoth}
\end{figure}

Figure\ \ref{figure:oneBoth} shows a shock and a rarefaction which interact with each other
and with the bottleneck. The rarefaction wave is centred in $x=0.6$ while the shock is starting from $x=1.6$.

\subsection{Model \eqref{statement}}
Simulations for the system in \eqref{statement} are provided by means of the numerical algorithm defined in
Section \ref{sec:algorithm}. Interactions of three bottlenecks and different type of waves are considered.
We set the space and time mesh size to be $\Delta x=0.02$ and $\Delta t=0.01$.
Velocity functions for slow moving vehicles are $\omega_i(\rho)=\omega_{max}^i(1-\rho)$,
with $\omega_{max}^1=0.49$ and $\omega_{max}^i=0.4$ $i=2,3$. We set again
	\begin{equation*}
	\varphi_i(\zeta)=\left\{
	\begin{array}{ll}
	\vmax-(\vmax-\vmin_i)\exp\left\{\ -\frac{\zeta^2}{\beta_i-|\zeta|}\right\} &
	 \textrm{if}\ |\zeta|\leq\beta_i\\
	\vmax_i & \textrm{otherwise},
	\end{array}\right.
	\end{equation*}
where, as before, $\vmax=1$.
\begin{figure}[t!]
	\begin{minipage}[c]{0.5\textwidth}
		\fbox{\centering
		\includegraphics[height= \textwidth]{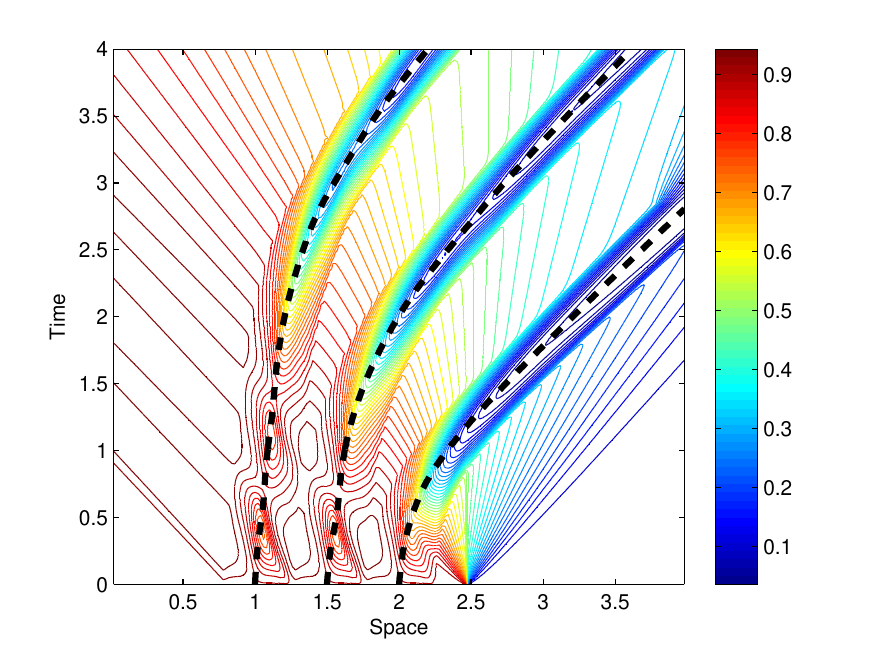}}
		\end{minipage}
		\hspace{30mm}
        \begin{tabular}{cl}
       		 \hline
       		 \multicolumn{2}{c}{\bf initial density $\bar\rho(x)$}\\
      		  \hline
      		  $\rho_\ell=0.9$& if $x\leq2.5$ \\
     		  $\rho_r=0.1$& if $x>2.5$\\
     		  \hline
		  \hline
	          \multicolumn{2}{c}{\bf initial positions }\\
     		 \hline
       	 	 $y_1(0)=$&1 \\
        	 $y_2(0)=$& 1.5\\
		 $y_3(0)=$&2\\
        	\hline
        \end{tabular}
\caption{Three bottlenecks interacting with a rarefaction wave.}
\label{threeRar}
\end{figure}
	\begin{figure}[ht!]
        	\begin{tabular}{ll}
        	\hline
        	\multicolumn{2}{c}{\bf initial density $\bar\rho(x)$}\\
     		\hline
       	 	$\rho_\ell=0.85$& if $x\leq3.5$ \\
        	$\rho_r=0.95$& if $x>3.5$\\
        	\hline
		\hline
        	\multicolumn{2}{c}{\bf initial positions }\\
     		\hline
       	 	$y_1(0)=$&1 \\
        	$y_2(0)=$& 2\\
	        $y_3(0)=$&3\\
        	\hline
        	\end{tabular}
        \hspace{1mm}
		\begin{minipage}[c]{0.5\textwidth}
      		 \center{
		 \fbox{
		 \includegraphics[height=\textwidth]{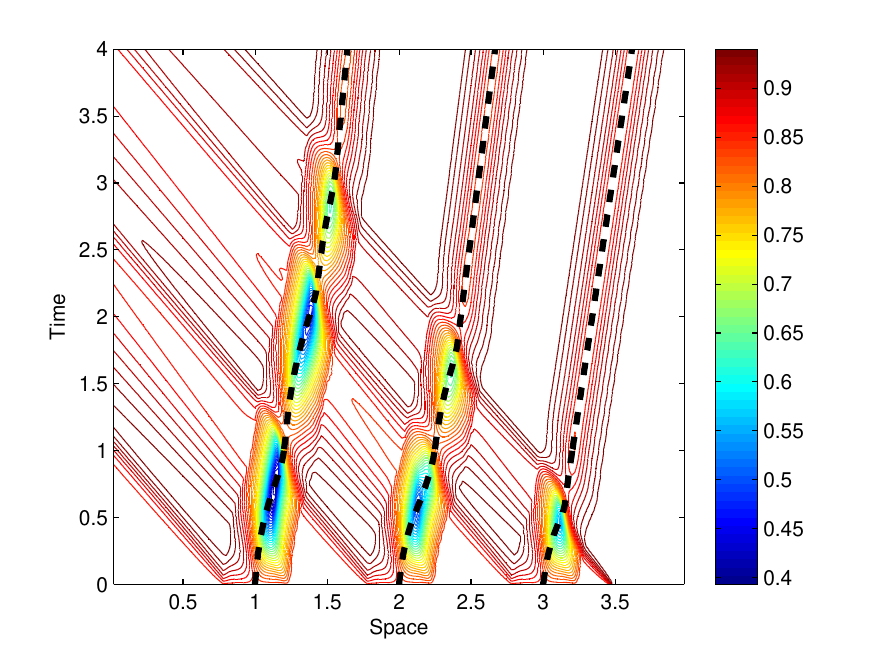}}}
		 \end{minipage}
	\caption{Three bottlenecks interacting with a shock.}
	\label{threeShock}
	\end{figure}
In Figure \ref{threeRar}, we set $\vmin_i=0.5$ and $\beta_i=0.25$ for $i=1,2,3$. The initial datum for the density is piecewise constant
with single discontinuity at $x=2.5$ and values $\rho_\ell=0.9$ and $\rho_r=0.1$. This gives rise to a rarefaction wave.
Each slow vehicle in the road is interacting with the rarefaction wave, but in different ways.
The second vehicle, starting from $y_2(0)=1.5$, is accelerating, but not as much as the first one, starting from $y_3(0)=2$.
This is due to differences in the rate of density decrease. The last one, starting from $y_1(0)=1$, has maximal velocity $w_{max}^1$
higher than the others. However, it is not capable of strong acceleration, since its initial distance w.r.t. the second is
equal to $\beta_1+\beta_2=0.5$, thus its speed is bounded by that of the second vehicle.
\\
In Figure \ref{threeShock}, we simulate an homogeneous case, i.e.\ all bottlenecks have the same characteristics.
We set $\vmin_i=0.5$ and $\beta_i=0.25$ for $i=1,2,3$. The initial datum is piecewise constant with $\rho_\ell=0.85$ for any $x\leq3.5$ and
$\rho_r=0.95$ for any $x>3.5$, thus generating a shock with negative speed $\lambda=-0.8$.
Initial data for bottlenecks are $y_{1,0}=1$, $y_{2,0}=2$ and $y_{3,0}=3$.
Starting from $t=0$, due to capacity dropping, the density behind each slow vehicle increases giving rise to a queue of cars.
This phenomenon generates further waves travelling and interacting with bottlenecks as well as the main shock.
When a bottleneck cross the shock, it enters a region with higher density, so that its velocity decreases.
On the other hand, during the interaction with each slow vehicle,
the speed of the shock decreases substantially, as it happens around $t=1$, $t=2.5$ and $t=4$.
	 	 

\section*{Conflict of interest}
All authors declare no conflicts of interest in this paper.

\end{document}